\definecolor{MyLinkColor}{rgb}{0,0,0.4}
\newcommand{\R}{{\mathbb R}}
\newcommand{\E}{{\mathcal E}}
\newcommand{\N}{{\mathbb N}}
\newcommand{\kH}{\mathcal{H}}
\newcommand{\kL}{\mathcal{L}}
\newcommand{\wt}{\widetilde}
\newcommand{\ov}{\overline}
\newcommand{\p}{\partial}
\newcommand{\e}{\varepsilon}
\newcommand{\0}{\Omega}
\newcommand{\supp}{\mathop{\rm supp}\nolimits}
\newcommand{\lt}{{L_2(\0)}}
\newcommand{\ds}{\,\mathrm{d}s}
\newcommand{\dz}{\,\mathrm{d}z}
\newcommand{\dtau}{\,\mathrm{d}\tau}
\newtheorem{thm}{Theorem}[section]
\newtheorem{prop}[thm]{Proposition}
\newtheorem{lemma}[thm]{Lemma}
\theoremstyle{remark}
\numberwithin{equation}{section} 
\begin{document}

\title[An asymptotic  model for atmospheric flows]{Weak and classical solutions to an asymptotic  model for atmospheric flows}

\thanks{}
\author{Bogdan-Vasile Matioc}
\address{Fakult\"at f\"ur Mathematik, Universit\"at Regensburg \\ D--93040 Regensburg, Deutschland}
\email{bogdan.matioc@ur.de}

\author{Luigi Roberti}
\address{Faculty of Mathematics, University of Vienna, Oskar--Morgenstern--Platz 1, 1090 Vienna, Austria}
\email{luigi.roberti@univie.ac.at}
\begin{abstract}
In this paper we study a recently derived mathematical model for nonlinear propagation of waves in the atmosphere,
for which  we establish  the local well-posedness in the setting of classical solutions. 
This is achieved by formulating the model as a quasilinear parabolic evolution problem in an appropriate functional analytic framework and by using abstract theory for such problems.
Moreover, for $L_2$-initial data, we construct  global weak solutions by employing a two-step approximation strategy based on a Galerkin scheme,
 where an equivalent formulation of the problem in terms of a new variable is used. 
 Compared to the original model, the latter has the advantage that the~${L_2}$-norm is a Liapunov functional.
\end{abstract}

%%% NEED TO ADD MSC %%%
\subjclass[2020]{ 35D30; 35K59; 35Q86}
\keywords{Local well-posedness; Global weak solution; Atmospheric flows}

\maketitle

\pagestyle{myheadings}
\markboth{\sc{B.-V.~Matioc  \& L. Roberti}}{\sc{An asymptotic  model for atmospheric flows}}

%%%%%%%%%%%%%%%%%%%%%%%%%%%%%%%%%%%%%%%%%%%%%%%%%%
%%%%%%%%%%%%%%%%%%%%%%%%%%%%%%%%%%%%%%%%%%%%%%%%%%
%%%%%%%%%%%%%%%%%%%%%%%%%%%%%%%%%%%%%%%%%%%%%%%%%%
%%%%%%%%%%%%%%%%%%%%%%%%%%%%%%%%%%%%%%%%%%%%%%%%%%
 \section{Introduction and main results}\label{Sec:0}
%%%%%%%%%%%%%%%%%%%%%%%%%%%%%%%%%%%%%%%%%%%%%%%%%%
%%%%%%%%%%%%%%%%%%%%%%%%%%%%%%%%%%%%%%%%%%%%%%%%%%
%%%%%%%%%%%%%%%%%%%%%%%%%%%%%%%%%%%%%%%%%%%%%%%%%%
%%%%%%%%%%%%%%%%%%%%%%%%%%%%%%%%%%%%%%%%%%%%%%%%%%

We consider the system
\begin{subequations}\label{PB}
\begin{equation}\label{EE}
\left.
\arraycolsep=1.4pt
\begin{array}{rcl}
u_t+u u_x+v u_y&=&\mu\Delta u +\alpha u+\beta v +K,\\[1ex]
u_x+v_y&=&0
\end{array}
\right\}\quad \text{in $\0,$ $t>0,$} 
\end{equation}
 in the unbounded horizontal strip
\[
\0\coloneqq\{z \coloneqq (x,y)\in\R^2\,: x\in\R,\, \, 0<y<1\},
\]
where the constants that appear in \eqref{EE} satisfy
\[
\mu \in(0,\infty)\qquad\text{and}\qquad \alpha,\,\beta\in\R.
\]
The equations \eqref{EE} are subject to the following boundary conditions:
\begin{equation}\label{BC}
\left.
\arraycolsep=1.4pt
\begin{array}{rcl}
 u&=&0\quad \text{on $\p\0$, \, $t>0$,} \\[1ex]
 v&=&0\quad \text{on $\{y=0\}$, \, $t>0$}.
\end{array}
\right\}
\end{equation}
Additionally, it is assumed that $u$ is known initially:
\begin{equation}\label{IC}
u(0)=u_0,
\end{equation}
where $u_0:\0\to\R$ is a given function.
\end{subequations}

The system \eqref{PB} has been only very recently derived in \cite[Equation (6.18)]{CJ22} as an asymptotic model of the general equations governing the atmospheric flow, and describes nonlinear wave propagation in the troposphere. In particular, it is meant to be a rigorous mathematical model of the \emph{morning glory cloud} pattern, a spectacular atmospheric phenomenon -- taking place in coastal regions of particular shape, especially in Australia -- whereby a train of long, narrow tubular clouds propagates for hundreds of kilometres in the horizontal direction perpendicular to the cloud line; see, for instance, \cite{Cr92,BiRe13,SmRoCr82}. Thus the motion is essentially two-dimensional, i.e., in the horizontal and vertical directions, which we denote here by $x$ and $y$, respectively. The variables $u,v$ may then be viewed as the corresponding velocity components. The term $K$ is a thermodynamic forcing term, which comprises the heat sources driving the motion. We refer to \cite{CJ22} for precise information on the physical background and the derivation of the model. It is important to point out that, apart from the special oscillatory solution found in \cite[Section 6~(d)]{CJ22} and the travelling wave solutions investigated in \cite{CJ23}, no mathematical results related to~\eqref{PB} are available yet. This paper is thus a first attempt at laying the groundwork of the mathematical analysis of this model, which, in the future, may hopefully help to add additional insight into this fascinating meteorological phenomenon to that which is already available.

An important observation with respect to the evolution problem \eqref{PB} is that the 
equations~\eqref{EE}$_2$ and \eqref{BC}$_2$ may be used to eliminate the unknown $v$ from the problem,
 but only at the price of having to include some nonlocal terms that involve $u$ instead. 
 Indeed, given~${z\coloneqq(x,y)\in\0}$, the fundamental theorem of calculus yields
  \[
  v(z)=\int_0^yv_y(x,s)\ds=-\int_0^y u_x(x,s)\ds \eqqcolon -Tu_x(z),
  \]
where, given $u\in L_2(\0)$, we define $Tu:\0\to\R$ by
  \begin{equation}\label{OpT}
Tu(z)\coloneqq  \int_0^y u (x,s)\ds, \qquad z=(x,y)\in\0.
  \end{equation}
We also mention the related model  consisting of the viscous primitive equations of large scale ocean and atmosphere dynamics considered in \cite{CaTi07,GuMaRo01} (and the references therein) where, 
similarly as in our context, the last velocity component is expressed via the fundamental theorem of calculus in terms of the other velocity components.
 This observation enables us to rewrite \eqref{EE} in the following form:
 \begin{subequations}\label{PB'}
\begin{equation}\label{EE'}
u_t=\mu\Delta u -u u_x + u_yTu_x+\alpha u-\beta Tu_x +K\quad \text{in $\0,$ $t>0,$} 
\end{equation}
subject to the Dirichlet boundary condition
\begin{equation}\label{BC'}
 u=0\quad \text{on $\p\0$, \, $t>0$,} 
\end{equation}
and the initial condition
\begin{equation}\label{IC'}
u(0)=u_0.
\end{equation}
\end{subequations}

\subsection{Main results}\label{SS:01}

The first goal of this paper is to establish the existence and uniqueness of classical solutions to \eqref{PB'} for sufficiently regular initial data and under the assumption that 
the function $K$ is locally Lipschitz continuous with respect to time in $H^r_D(\0)$ for some small $r>0$, that is,
\begin{equation}\label{Kcla}
K\in {\rm C}^{1-}([0,\infty),H^{r}_D(\0)),
\end{equation}
see Theorem \ref{MT1} and Section~\ref{SS:11}.

\begin{thm}[Local well-posedness]\label{MT1}
Let $s\in (1,2)$, $r\in(0,s)\setminus\{1/2\}$, $\alpha\coloneqq s/2$, and assume that~${K = K(t,z)}$ satisfies \eqref{Kcla}. 
Then, for each $u_0\in H^{s}_D(\0)$, the Cauchy problem~\eqref{PB'} possesses a unique maximal classical solution
 \[u \in {\rm C}([0,T^+),H^{s}_D(\0))\cap {\rm C}((0,T^+),H^{2}_D(\0))\cap  {\rm C}^1((0,T^+),L_2(\0))\cap  {\rm C}^{\alpha}([0,T^+),L_2(\0)),\]
where $T^+ = T^+(u_0)>0$ is the maximal time of existence of the solution.
 Moreover, the map $(t,u_0) \mapsto u(t;u_0)$ is a semiflow  on $H^{s}_D(\0)$.
\end{thm}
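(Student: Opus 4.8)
The plan is to recast \eqref{PB'} as an abstract quasilinear parabolic Cauchy problem and to apply the abstract existence theory for such problems alluded to above. I would fix the base space $E_0 := \lt$ and the regularity space $E_1 := H^2_D(\0)$, and let $A_0 := -\mu\Delta$, subject to the Dirichlet condition \eqref{BC'}, denote the negative Dirichlet Laplacian on the strip. Being self-adjoint and positive, $A_0$ is sectorial, possesses a bounded $H^\infty$-calculus, and therefore enjoys maximal regularity; its interpolation scale satisfies $H^\sigma_D(\0) \doteq (E_0, E_1)_{\sigma/2, 2}$ for every non-exceptional index $\sigma \ne 1/2$, which is precisely why $r = 1/2$ is excluded in the hypotheses and which identifies the trace space of the initial data (with $\gamma := s/2$) as $H^s_D(\0)$. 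Writing the convective term as $-uu_x + u_yTu_x = -\bigl(u\,\p_x - (Tu_x)\,\p_y\bigr)u$ by means of \eqref{OpT}, equation \eqref{EE'} then takes the form
\begin{equation*}
\p_t u + \A(u)u = F(t,u), \qquad \A(u)w := -\mu\Delta w + u\,\p_x w - (Tu_x)\,\p_y w, \qquad F(t,u) := \alpha u - \beta Tu_x + K(t),
\end{equation*}
and I set $B(u) := \A(u) + \mu\Delta$ for the first-order part.

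The analytic core of the argument is to verify that $\A \in {\rm C}^{1-}(H^s_D(\0), \kL(E_1, E_0))$ and $F \in {\rm C}^{1-}([0,\infty)\times H^s_D(\0), E_0)$. Since $B(u)$ and the map $u \mapsto \alpha u - \beta Tu_x$ depend linearly on $u$, both statements reduce to a single boundedness estimate, namely $\|B(u)w\|_{E_0} \lesssim \|u\|_{H^s}\|w\|_{H^2}$, the ${\rm C}^{1-}$-dependence on $t$ being supplied solely by \eqref{Kcla}. The transport coefficient is controlled through the embedding $H^s_D(\0)\hookrightarrow L_\infty(\0)$, valid for $s>1$, which gives $\|u\,\p_x w\|_{E_0} \le \|u\|_{L_\infty}\|w\|_{H^2}$. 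The delicate term is the nonlocal one $(Tu_x)\,\p_y w$: here I would exploit that $T$ integrates in $y$, so that the pointwise bound $|Tu_x(x,y)| \le \|u_x(x,\cdot)\|_{L_2(0,1)}$ holds, and then combine the one-dimensional Sobolev embedding applied to $x \mapsto \|u_x(x,\cdot)\|_{L_2(0,1)}$, which lies in $L_q(\R)$ by virtue of $u_x \in H^{s-1}(\R; L_2(0,1))$, with Hölder's inequality in $x$ and the additional integrability $\p_y w \in L_p(\R; L_2(0,1))$ available for every finite $p \ge 2$ since $w \in H^2(\0)$. A count of the admissible exponents shows that this product estimate closes exactly when $s > 1$, hence on the whole range $s \in (1,2)$.

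This nonlocal estimate is the main obstacle, and it is also where the quasilinear formulation becomes essential. Were one to keep the full convective term on the right-hand side as a single map $H^s_D(\0) \to \lt$, both factors of $(Tu_x)\,u_y$ would carry merely $H^{s-1}$-regularity and the exponent count would only succeed for $s \ge 5/4$; by contrast, distributing one derivative onto the $H^2$-factor $w$ creates the extra room $\p_y w \in L_p(\R; L_2(0,1))$ that covers all $s \in (1,2)$. I would also record that, since the strip is unbounded in $x$, the embeddings above and the maximal regularity of $A_0$ are justified by translation invariance together with the Fourier transform in the $x$-variable.

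It then remains to check the structural hypotheses of the abstract theorem and to conclude. For each fixed $u \in H^s_D(\0)$, the boundedness estimate together with interpolation shows that $B(u)$ is $A_0$-bounded with relative bound zero; hence $\A(u)$ is sectorial on $\lt$ and inherits the maximal regularity of $-\mu\Delta$. With $\A(u_0)$ thus admissible and with $(\A, F) \in {\rm C}^{1-}$ established, the abstract existence and uniqueness theorem for quasilinear parabolic problems produces a unique maximal solution in the regularity class asserted in Theorem \ref{MT1}: the parabolic smoothing $u(t) \in H^2_D(\0)$ for $t > 0$ together with $u \in {\rm C}((0,T^+), H^2_D(\0)) \cap {\rm C}^1((0,T^+),\lt)$, the ${\rm C}^{s/2}([0,T^+),\lt)$-regularity up to the initial time, and the continuous dependence on $u_0$ yielding the semiflow property, are all part of the conclusion of that theorem.
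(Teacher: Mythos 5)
Your proposal follows the same overall strategy as the paper: recast \eqref{PB'} as a quasilinear evolution equation for the couple $E_0=L_2(\0)$, $E_1=H^2_D(\0)$, take $H^s_D(\0)$ (an interpolation space of the couple, whence the exceptional index $1/2$) as phase space, and invoke abstract quasilinear parabolic theory of Amann type. Within this common frame you make two genuinely different implementation choices. First, the decomposition: you keep only the transport part in the operator and write the nonlocal term as $(Tu_x)\p_y w$, i.e.\ with $T$ acting on the coefficient, relegating $\alpha u-\beta Tu_x+K(t)$ to the inhomogeneity; the paper's operator $A(u)v=\mu\Delta v+\alpha v-uv_x+u_yTv_x-\beta Tv_x$ keeps every term except $K$ and places $T$ on the argument $v$. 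Both reproduce $u_yTu_x$ upon setting $v=u$, so either is admissible. Second, the key estimate: you bound the nonlocal term by a hands-on anisotropic argument (the pointwise bound $|Tu_x(x,y)|\le\|u_x(x,\cdot)\|_{L_2(0,1)}$, the embedding $H^{s-1}(\0)\hookrightarrow H^{s-1}(\R_x;L_2(0,1))$, one-dimensional Sobolev embedding and H\"older in $x$), whereas the paper proves $T\in\kL(H^\sigma(\0))$ for $\sigma\in[0,2]$ (Lemma~\ref{operator_T}, by interpolation) and then quotes the multiplication theorem of \cite{Am91} to obtain $H^{s-1}\cdot H^{(3-s)/2}\hookrightarrow L_2(\0)$ (Lemma~\ref{L:c1}). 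Your route is more elementary and exploits the strip geometry directly; the paper's is shorter, and bilinearity plus boundedness immediately gives $A\in{\rm C}^\infty(E_{s/2},\kL(E_1,E_0))$, exactly as your linearity remark intends. Both close on the whole range $s\in(1,2)$.

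Two points need repair before this becomes a proof. (i) The generator property: as stated, deducing that $B(u)$ is $A_0$-bounded with relative bound zero from the estimate $\|B(u)w\|_{E_0}\lesssim\|u\|_{H^s}\|w\|_{H^2}$ ``together with interpolation'' is a non sequitur --- an $E_1\to E_0$ bound cannot be interpolated downwards. What saves you is that your anisotropic estimates in fact control $B(u)w$ by $\|w\|_{H^{1+\e}}$ for some $\e\in(0,1)$, i.e.\ $B(u)\in\kL(E_\theta,E_0)$ for some $\theta<1$; only this intermediate mapping property, combined with the interpolation inequality, produces the bound $\e\|w\|_{E_1}+C_\e\|w\|_{E_0}$. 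You should state it explicitly; it is precisely the content of the paper's Lemma~\ref{L:gerp}, which concludes via the perturbation theorem \cite[Theorem~I.1.3.1~(ii)]{Am95} (note that in Amann's framework only generation of an analytic semigroup is required, so your appeal to the $H^\infty$-calculus and maximal regularity is heavier machinery than needed). (ii) The inhomogeneity and the semiflow: you verify only $F\in{\rm C}^{1-}([0,\infty)\times H^s_D(\0),E_0)$ and keep the problem non-autonomous, then assert that existence, the stated regularity, and the semiflow property are ``part of the conclusion'' of an unspecified theorem. The results the paper applies (\cite[Theorem~12.1]{Am93}, \cite[Theorem 1.1]{MW20}) are formulated for autonomous equations, and the paper's verification culminates in $f$ taking values in $F_{r/2}$ with $r>0$ --- this is why hypothesis \eqref{Kcla} demands $r>0$ rather than $K$ merely Lipschitz into $L_2(\0)$. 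Accordingly, the paper appends $t$ to the unknown (after even reflection of $K$) and obtains a semiflow for the augmented autonomous system; this step also makes the semiflow assertion meaningful, since for genuinely time-dependent $K$ the map $(t,u_0)\mapsto u(t;u_0)$ does not satisfy the semigroup property. You should either carry out the same autonomization --- checking that your $F$ maps into $E_\gamma$ for some $\gamma>0$, which it does since $\alpha u-\beta Tu_x\in H^{s-1}(\0)$ for $u\in H^s_D(\0)$ and $K(t)\in H^r_D(\0)$ --- or cite a non-autonomous version of the theory whose hypotheses you then verify.
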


In order to prove Theorem~\ref{MT1} we reformulate \eqref{PB'} in a suitable functional analytic framework as a quasilinear evolution problem and apply abstract theory for these types of problems from ~\cite{Am93} 
(see also~\cite{MW20}). 
Although \eqref{PB'}$_1$ has a semilinear structure,  the application of the quasilinear theory enables us to consider  more general initial data. 

Our  second main goal  is to show that, for each $u_0\in L_2(\0)$, and under the assumption  
\begin{equation}\label{Kwek}
K\in L_2(0,t; L_2(\0)) \qquad\text{for all $t>0$},
\end{equation}
the evolution problem~\eqref{PB'} possesses a global weak solution, as stated in Theorem~\ref{MT3}. 
Our approach relies on a two-step approximation strategy and  on an equivalent formulation of~\eqref{PB'} in terms of the new variable
\begin{equation}\label{subst}
w(t,z)\coloneqq u(t,z)e^{-\gamma t},\qquad t\geq 0,\, z\in\0,
\end{equation}
 see the system \eqref{PB''} below, where the positive constant $\gamma$ is defined as
\begin{equation}\label{gamma}
\gamma\coloneqq 1+\alpha+\frac{\beta^2}{2\mu}.
\end{equation}
The reason for this change of variables is motivated by the fact that 
 the $L_2$-functional
\[
\E(u)\coloneqq \frac{1}{2}\int_{\0}u^2\dz,
\] 
is not non-increasing along solutions the $L_2$-functional (if $K=0$), as we have
\begin{align*}
\frac{{\rm d}\E(u(t))}{{\rm d}t}&=\int_\0 u(t)\p_tu(t)\dz\\[1ex]
&=\int_\0 u(t) \big[\mu\Delta u -u u_x + u_yTu_x+\alpha u-\beta Tu_x +K\big](t)\dz\\[1ex]
&=\int_\0 \big[-\mu|\nabla u| ^2 + \alpha u^2-\beta uTu_x+Ku\big](t)\dz
\end{align*}
for $t>0$. 
However, it is important to note that  in the energy balance  the nonlinear terms vanish.
This motivates the substitution \eqref{subst}, which is a trick which finds application for example in the proof of the weak parabolic maximum principle.
With $\gamma$ chosen as in \eqref{gamma}, it is easy to see that the functional $\E$ is non-increasing along $w$ (if $K=0$), since
\begin{align}\label{Eneqw}
\frac{{\rm d}\E(w(t))}{{\rm d}t}+\frac{1}{2}\int_\0 \big[\mu|\nabla w| ^2 + w^2\big](t)\dz\leq\frac{1}{2}\int_\0 K^2(t)\dz,\qquad t>0,
\end{align}
see Section~\ref{Sec:2}.
The estimate \eqref{Eneqw} is essential for the proof of Theorem~\ref{MT3}.

\begin{thm}\label{MT3}
Given   $u_0\in L_2(\0)$ and $K=K(t,z)$ that satisfies \eqref{Kwek}, there exists a global weak solution $u$  to~\eqref{PB'} with the following properties:
 \begin{itemize} 
 \item[(i)] for all $t>0$ we have $u\in  L_\infty(0,t;L_2(\0))\cap  L_2(0,t;H^1(\0));$
 \item[(ii)] for all $t>0$ and all $\phi\in{\rm C}^\infty([0,t]\times\0) $ with the property  that there exists $M>0$ with  ${\rm  \supp\,}\phi(\tau)\subset (-2^M,2^M)\times (0,1) $ for all $\tau\in[0,t],$ we have
   \begin{equation*}
 \begin{aligned}
 &\hspace{-1cm}\int_{\0} u(t) \phi(t)\dz-\int_{\0} u_0 \phi(0)\dz-\int_0^t\int_{\0} u \p_t \phi\dz\dtau\\[1ex]
 &+\int_0^t\int_{\0}  \mu\nabla u\cdot\nabla \phi +u T\p_x u\p_y\phi+ \phi\big(2u\p_xu-\alpha u+\beta T\p_x u- K\big) \dz\dtau=0;
 \end{aligned}
 \end{equation*}
  \item[(iii)]   for almost all $t>0$  
\begin{equation}\label{EnE}
\E(u(t)e^{-\gamma t})+\frac{1}{2}\int_0^t\int_{\0}e^{-2\gamma \tau}\big(\mu |\nabla u|^2+u^2\big)\dz\dtau\leq  \E(u_0) +  \frac{1}{2}\int_0^t\int_{\0}K^2\dz\dtau. 
\end{equation}
 \end{itemize}
\end{thm}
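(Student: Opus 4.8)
The plan is to work throughout with the rescaled variable $w=ue^{-\gamma t}$ from \eqref{subst}, since it is for $w$ that the $L_2$-functional is dissipative, as quantified by \eqref{Eneqw}, and to combine a Faedo--Galerkin construction on bounded sub-domains with a domain-exhaustion limit. Substituting \eqref{subst} into \eqref{EE'} shows that $w$ solves a semilinear parabolic equation of the same type, whose zeroth-order coefficient is shifted to $\alpha-\gamma$ and whose nonlinear and forcing terms carry the bounded factors $e^{\gamma t}$ and $e^{-\gamma t}$; on any finite interval $[0,t]$ these factors are harmless. First I fix the exhausting family of bounded domains $\0_n\coloneqq\0\cap\{|x|<2^n\}$ and, on each $\0_n$, solve the $w$-problem subject to homogeneous Dirichlet conditions on the whole of $\p\0_n$ by a Galerkin scheme: projecting onto the span of finitely many Dirichlet-Laplacian eigenfunctions yields a quadratic ODE system, locally solvable by Picard--Lindel\"of; testing it against the approximate solution itself and exploiting that the nonlinearity drops out of the energy balance (exactly as in the computation preceding \eqref{Eneqw}), Young's inequality on the $\beta$-term together with the choice \eqref{gamma} produces an a priori bound in $L_\infty(0,t;L_2(\0_n))\cap L_2(0,t;H^1_0(\0_n))$ that is uniform in the Galerkin index and, crucially, in $n$, the forcing contributing only $\tfrac12\int_0^t\int_\0 K^2\dz\dtau$. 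This globalizes the ODE solutions and controls the Galerkin limit.

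To pass to the Galerkin limit on a fixed $\0_n$ I extract weak-$*$ and weak limits from the energy bound and, to treat the quadratic terms, derive a uniform bound on $\p_t w$ in a negative-order Bochner space (e.g.\ $L_{4/3}(0,t;W^{-1,4/3}(\0_n))$) by writing the nonlinearity in the divergence form $-\p_x(w^2)+\p_y(wT\p_x w)$ and estimating $\|w^2\|_{L_2}=\|w\|_{L_4}^2\lesssim\|w\|_{L_2}\|w\|_{H^1}$ and $\|wT\p_x w\|_{L_{4/3}}\lesssim\|w\|_{L_4}\|T\p_x w\|_{L_2}\lesssim\|w\|_{L_2}^{1/2}\|w\|_{H^1}^{3/2}$, using the two-dimensional Gagliardo--Nirenberg inequality and the elementary bound $\|T\p_x w\|_{L_2}\le\|w\|_{H^1}$. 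The Aubin--Lions lemma then yields strong convergence in $L_2((0,t)\times\0_n)$, enough to identify the limits of the nonlinear quantities $w^2$ and $wT\p_x w$ (each a product of a strongly and a weakly convergent factor). This produces, for every $n$, a weak solution $w_n$ on $\0_n$ inheriting the integrated form of \eqref{Eneqw} with constants independent of $n$.

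The second step is the domain-exhaustion limit, and this is where the main difficulty lies. Extending each $w_n$ by zero to $\0$ gives, by the uniform bound, a sequence bounded in $L_\infty(0,t;L_2(\0))\cap L_2(0,t;H^1(\0))$, from which I extract weak-$*$ and weak limits. The obstacle is that the strip is unbounded, so $H^1(\0)\hookrightarrow L_2(\0)$ is not compact and Aubin--Lions is unavailable globally; the nonlinear terms therefore cannot be passed to the limit by the argument above. The resolution is precisely the compact-support hypothesis on the admissible test functions in (ii): the embedding is compact on every truncated strip $\0\cap\{|x|<R\}$, so a diagonal argument over $R\in\N$ produces a subsequence converging strongly in $L_2$ on each $(0,t)\times(\0\cap\{|x|<R\})$, and for any $\phi$ with $\supp\phi(\tau)\subset(-2^M,2^M)\times(0,1)$ this local strong convergence is exactly what is needed to pass to the limit in the nonlinear integrals.

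It remains to collect the conclusions. The uniform bounds survive under weak and weak-$*$ lower semicontinuity of the norms, giving (i); the linear terms and the time-derivative pairing pass to the limit by weak convergence, so together with the nonlinear terms one obtains the weak formulation, and since it encodes the initial datum through the term $\int_\0 u_0\phi(0)\dz$ the identity in (ii) follows; finally, the integrated inequality \eqref{EnE} is recovered for almost every $t$ from lower semicontinuity of the convex functionals $\E(w(t))$ and $\int_0^t\int_\0(\mu|\nabla w|^2+w^2)\dz\dtau$, where $e^{-2\gamma\tau}\le1$ lets one replace the rescaled forcing by $\tfrac12\int_0^t\int_\0 K^2\dz\dtau$. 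Undoing the substitution $u=we^{\gamma t}$ then transfers (i)--(iii) back to $u$ and completes the proof.
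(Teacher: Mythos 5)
Your proposal follows essentially the same strategy as the paper's proof: the substitution $w=ue^{-\gamma t}$ with $\gamma$ as in \eqref{gamma}, a Galerkin scheme on the truncated strips $(-2^N,2^N)\times(0,1)$ with energy bounds uniform in both the Galerkin index and $N$, a dual-space bound on $\p_t w$ exploiting the divergence form $-\p_x(w^2)+\p_y(wT\p_x w)$ of the nonlinearity, Aubin--Lions compactness on the bounded domains, a diagonal exhaustion argument in which the compact support of the admissible test functions compensates for the non-compactness of $H^1(\0)\hookrightarrow L_2(\0)$ on the unbounded strip, weak lower semicontinuity for the energy inequality, and undoing the substitution at the end; the paper's choice of dual space is $H^2_D(\0^N)'$ rather than your $W^{-1,4/3}$, which is immaterial.

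Two technical points deserve repair. First, since $K$ is only assumed $L_2$ in time, the right-hand side of the Galerkin ODE system is merely measurable in $t$, so Picard--Lindel\"of does not apply as stated; the paper handles this by first mollifying $K$ to smooth $K_n$ and later passing to the limit using $K_n\to K$ in $L_2(0,t;L_2(\0))$, whereas in your setup one would instead invoke Carath\'eodory's existence theorem for ODEs. Second, the natural output of the Galerkin and exhaustion limits is the weak formulation with \emph{time-independent} test functions $\xi\in{\rm C}^\infty_0(\0)$ (this is exactly the paper's Theorem~\ref{MT2}); upgrading it to the identity (ii) with time-dependent $\phi$ is a genuine additional step, to which the paper devotes Section~\ref{SS:24} via Riemann sums over partitions of $[0,t]$ and the continuity $w\in{\rm C}([0,t],H^2_0(\0^M)')$. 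Your bound on $\p_t w$ in $L_{4/3}(0,t;W^{-1,4/3})$ makes this upgrade routine (integration by parts for vector-valued functions), and is arguably a cleaner route than the paper's, but it must actually be carried out rather than subsumed into ``the time-derivative pairing passes to the limit by weak convergence.''
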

The proof of this result is presented in Section~\ref{Sec:2}.
After reformulating~\eqref{PB'} in terms of the new variable~$w$, see~\eqref{PB''}, we establish~\eqref{Eneqw} and then consider the problem
 \eqref{PB''} on the rectangle~${\0^N\coloneqq(-2^N,2^N)\times(0,1)}$ with $N\geq 1$,
  a regularized $K$, and homogeneous Dirichlet boundary conditions on $\p\0^N$, see \eqref{PBN}.
  Using a Galerkin scheme,  we prove  in Section~\ref{SS:21},  that the problem \eqref{PBN} possesses a weak solution $w^N$, see  Proposition~\ref{P:1}.
In a second step we prove that the sequence of weak solutions $(w^N)$ converges, in a suitable sense, towards a weak solution to \eqref{PB''}, see Theorem~\ref{MT2}.
Then, in  a final step, we come back to the original unknown $u$ and deduce from Theorem~\ref{MT2} that $u$ is a weak solution to \eqref{PB'}, as stated in Theorem~\ref{MT3}.

 \section{Local well-posedness of \eqref{PB'}}\label{Sec:1}
 The main goal of this section is to prove the local well-posedness result stated in Theorem~\ref{MT1}.
 To this end we first introduce the function spaces which we are going to use in our analysis and provide some useful estimates for the operator $T$ defined in \eqref{OpT}.

\subsection{Preliminaries}\label{SS:11} Let us fix some notation. 
Given a Banach space $E$, an interval~${I \subset \R}$, $n\in\N$, and $\alpha\in(0,1)$,  we denote by ${\rm C}^n(I,E)$ the space of all
$n$-times continuously differentiable functions in $I$ and ${\rm C}^{n+\alpha}(I,E)$ is its subspace   that contains
only functions with locally $\alpha$-H\"older continuous $n$-th derivative.
Moreover,  ${\rm C}^{1-}(I,E)$ is the space of all locally Lipschitz  continuous functions on $I$.
Similarly, given Banach spaces  $X$ and $Y$,   ${\rm C}^{1-}(X,Y)$ is the space of all  locally Lipschitz continuous functions from $X$ to $Y.$ 
Let further $\mathcal{L}(X,Y)$ denote the Banach space of all bounded linear maps from $X$ to $Y$ (if $X=Y$, we set $\mathcal{L}(X) = \mathcal{L}(X,X))$. 

Given an open subset $\mathcal{O}\subset\R^2$, we let $L_2(\mathcal{O})$ be the space of square integrable functions on $\mathcal{O}$ 
and $H^s(\mathcal{O})$ denotes the Bessel potential space of order  $ s\in (0,\infty)$.
We further define~$H^1_0(\mathcal{O})$ as the closure of the set ${\rm C}^\infty_0(\mathcal{O})$ of  smooth functions with compact support in $\mathcal{O}$ in $H^1(\mathcal{O})$.

For $s\in[0,2]$ we set 
\[
H^s_D(\0)=
\left\{
\arraycolsep=1.4pt
\begin{array}{lcl}
H^{s}(\0)\,,&\quad \text{if $s\in(0,1/2)$,} \\[1ex]
 \{u \in H^{s}(\0)\,:\, \text{$u=0$ on $\p\0$}\}\,, & \quad\text{if $s\in(1/2,2]$.}
\end{array}
\right.
\]
Letting $[\cdot,\cdot]_\theta$ be the complex interpolation functor with exponent $\theta\in[0,1]$, we infer from \cite[Theorem 13.3]{Am84} that 
\begin{equation}\label{interpr}
[L_2(\0),H^2_D(\0)]_\theta=H^{2\theta}_D(\0).
\end{equation}

Given Banach spaces with   $E_1$ and $E_0$ with dense embedding $E_1\hookrightarrow E_0$, we follow ~\cite{Am95} and set
\begin{equation*} 
\kH(E_1,E_0)=\{A\in\kL(E_1,E_0)\,:\, \text{$-A$ generates an analytic semigroup in $\kL(E_0)$}\}.
\end{equation*}

\subsection{The abstract formulation} 
In order to prove Theorem \ref{MT1}, we shall establish some lemmas as preparation. 
The first one provides an important mapping property of the operator~$T$.
\begin{lemma}\label{operator_T}
Given $s\in[0,2]$, it holds that $T\in\kL(H^s(\0))$.
\end{lemma}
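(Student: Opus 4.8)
The plan is to treat the two endpoints $s=0$ and $s=2$ by hand and then recover the full range $s\in(0,2)$ by complex interpolation. The whole argument rests on two elementary identities for the vertical integration operator $T$ from \eqref{OpT}: differentiation in the horizontal variable commutes with $T$, i.e.\ $\p_x T=T\p_x$ (differentiation under the integral sign), while differentiation in the vertical variable inverts it, i.e.\ $\p_y T=\id$ (the fundamental theorem of calculus). These hold for smooth $u$ and extend to $H^2(\0)$ by density, the trace and absolute-continuity properties of $H^2$-functions guaranteeing that $Tu$ is well defined and that $\p_y Tu=u$ weakly.

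For $s=0$ I would note that $T\in\kL(L_2(\0))$ with norm at most one: fixing $x$ and applying the Cauchy--Schwarz inequality together with the bound $y\le 1$ gives
\[
\int_0^1\Big|\int_0^y u(x,\sigma)\,\mathrm{d}\sigma\Big|^2\,\mathrm{d}y\le \int_0^1\int_0^1|u(x,\sigma)|^2\,\mathrm{d}\sigma\,\mathrm{d}y=\int_0^1|u(x,\sigma)|^2\,\mathrm{d}\sigma,
\]
and integrating in $x$ yields $\|Tu\|_{\lt}\le\|u\|_{\lt}$. This is the only genuinely analytic estimate needed; everything else follows from it via the two identities.

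For $s=2$ I would control each derivative of $Tu$ up to second order separately. Using $\p_x T=T\p_x$ and $\p_y T=\id$ one obtains $\p_x^2 Tu=T\p_x^2 u$, $\p_x\p_y Tu=\p_x u$, and $\p_y^2 Tu=\p_y u$, together with the first-order relations $\p_x Tu=T\p_x u$ and $\p_y Tu=u$. Bounding the terms involving $T$ by the $L_2$-estimate from the previous step and leaving the others untouched, every summand is dominated by $\|u\|_{H^2(\0)}$, so $T\in\kL(H^2(\0))$. Having established boundedness at the endpoints $L_2(\0)=H^0(\0)$ and $H^2(\0)$, I would interpolate: since $T$ acts boundedly on both, it acts boundedly on each complex interpolation space, and the identity $[L_2(\0),H^2(\0)]_\theta=H^{2\theta}(\0)$ (the analogue of \eqref{interpr} for the spaces \emph{without} boundary conditions) yields $T\in\kL(H^{2\theta}(\0))$ for every $\theta\in(0,1)$, covering all $s\in(0,2)$.

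The step I expect to require the most care is precisely this interpolation identity. It cannot be read off from \eqref{interpr}, because $T$ does not preserve the Dirichlet scale: although $Tu=0$ on $\{y=0\}$, the trace of $Tu$ on $\{y=1\}$ equals $\int_0^1 u(x,\sigma)\,\mathrm{d}\sigma$, which need not vanish even when $u\in H^s_D(\0)$. I therefore genuinely need $[L_2(\0),H^2(\0)]_\theta=H^{2\theta}(\0)$ for the ordinary Bessel potential spaces; this rests on the existence of a bounded linear extension operator $H^s(\0)\to H^s(\R^2)$ for the strip, after which the identity is obtained by transferring the well-known whole-space result through a retraction/coretraction pair. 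The remaining technical point is the rigorous justification of the two operator identities on $H^2(\0)$ — in particular that $\p_y Tu=u$ holds weakly — which is handled by approximating $u\in H^2(\0)$ by smooth functions and passing to the limit using the already established continuity of $T$ on $L_2(\0)$.
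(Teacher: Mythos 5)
Your proposal is correct and follows essentially the same route as the paper: the endpoint bound $\|Tu\|_{L_2(\0)}\le\|u\|_{L_2(\0)}$ via Cauchy--Schwarz/Fubini, the identities $(Tu)_x=Tu_x$ and $(Tu)_y=u$ to handle the higher-order case (the paper passes through $s=1$ before $s=2$, but this is immaterial), and complex interpolation using $[L_2(\0),H^2(\0)]_\theta=H^{2\theta}(\0)$ for the ordinary Bessel potential scale, which is exactly what the paper invokes via \cite[Corollary~11.4]{Am84}. Your observation that \eqref{interpr} (the Dirichlet scale) is not the right tool here --- since $Tu$ need not vanish on $\{y=1\}$ --- is a correct and worthwhile clarification of why the lemma is stated for $H^s(\0)$ rather than $H^s_D(\0)$.
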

\begin{proof}
We first verify the claim for $s=0$. 
To this end we first take  $u\in {\rm C}^\infty_0(\0)$ and compute, in view of H\"older's inequality and Fubini's theorem, that 
\begin{equation}\label{E12}
\|T u\|_2=\Big(\int_\0\Big|\int_0^yu(x,s)\ds\Big|^2\dz\Big)^{1/2}\leq \Big(\int_\0 \int_0^1 u^2(x,s)\ds \dz\Big)^{1/2}=\|u\|_2.
\end{equation}
Since $  {\rm C}^\infty_0(\0)$ is dense in $L_2(\0)$, this proves the claim for $s=0$.

In order to demonstrate the claim for $s=1$ we use the fact that ${\rm C}^\infty(\ov\0) \cap H^1(\0)$ is dense in~$H^1(\0)$, the estimate \eqref{E12}, and the observation that, 
given $u\in {\rm C}^\infty(\ov\0) \cap H^1(\0)$, we have 
\[
(Tu)_x=Tu_x\qquad \text{and}\qquad (Tu)_y=u,
\]
to obtain
 \[\Vert Tu\Vert_{H^1(\0)}^2 \leq 2\Vert w\Vert_\lt^2 + \Vert w_x\Vert_\lt^2 \leq 2\Vert w\Vert_{H^1(\0)}^2.\]
The claim for $s=2$ is now easily inferred from the result for $s=1$. 
In view of the interpolation property 
\[
[L_2(\0),H^2(\0)]_\theta=H^{2\theta}(\0),\qquad \theta\in[0,1]
\]
(see e.g. \cite[Corollary 11.4]{Am84}), the desired claim is a consequence of the result for $s\in\{0,2\}$.
\end{proof}

Let $E_0\coloneqq L_2(\0)$, $E_1\coloneqq H^2_D(\0)$, and set $E_\theta\coloneqq[E_0,E_1]_\theta$ for $\theta\in(0,1)$.
We can now reformulate \eqref{PB'} as the following evolution problem:
\begin{equation}\label{AFor}  
 \frac{{\rm d}u(t)}{{\rm d}t} =A(u(t))u(t) + K(t), \quad t>0, \qquad  u(0) = u_0,
 \end{equation}
 where $K\in {\rm C}^{1-}([0,\infty), E_{r/2})$ -- see our assumption~\eqref{Kcla}, as well as \eqref{interpr} -- while the operator~${A: E_{s/2}\to\kL(E_1,E_0)}$, $s\in(1,2),$ is defined by
 \[
A(u)v\coloneqq\mu\Delta v+ \alpha v - uv_x + u_y Tv_x - \beta Tv_x,\qquad u\in E_{s/2},\,  v\in E_1.
 \]
The next lemma shows that $A$ is well-defined and smooth with respect to its arguments.
\begin{lemma}\label{L:c1} Given $s\in(1,2)$, we have: 
\begin{itemize}
\item[(i)]$[(u,v)\mapsto u v]: H^{s}(\0)\times H^{s-1}(\0)\to L_2(\0)$ is continuous;
\item[(ii)] $[(u,v)\mapsto uTv]: H^{s-1}(\0)\times H^{(3-s)/2}(\0)\to L_2(\0)$ is continuous.
\end{itemize}
Moreover, $A\in {\rm C}^\infty(E_{s/2}, \kL(E_1,E_0))$.
\end{lemma}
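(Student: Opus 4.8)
The plan is to establish the two bilinear bounds (i) and (ii) first, and then to deduce the smoothness of $A$ from the fact that $A(u)$ depends \emph{affinely} on $u$. Concretely, I would write
\[
A(u)v = \big(\mu\Delta v + \alpha v - \beta Tv_x\big) - uv_x + u_y Tv_x =: A_0 v + L(u)v,
\]
where $A_0$ is independent of $u$ while $L(u)v := -uv_x + u_y Tv_x$ is linear in $u$. Once the relevant mapping properties are in place, $A=A_0+L$ is a bounded affine map between Banach spaces, and is therefore automatically of class ${\rm C}^\infty$.

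For (i) I would use that $\0\subset\R^2$, so that $d=2$ and $s>1=d/2$ lies strictly above the critical Sobolev exponent. Since the strip admits a bounded extension operator, we have $H^s(\0)\hookrightarrow L_\infty(\0)$, and Hölder's inequality together with $H^{s-1}(\0)\hookrightarrow\lt$ yields
\[
\|uv\|_{\lt} \le \|u\|_{L_\infty(\0)}\,\|v\|_{\lt} \le C\,\|u\|_{H^s(\0)}\,\|v\|_{H^{s-1}(\0)},
\]
so the bilinear map is bounded and hence continuous.

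For (ii) I would first invoke Lemma~\ref{operator_T}: as $(3-s)/2\in(1/2,1)\subset[0,2]$, the operator $T$ is bounded on $H^{(3-s)/2}(\0)$, so it remains to show that pointwise multiplication is bounded from $H^{s-1}(\0)\times H^{(3-s)/2}(\0)$ into $\lt$. I expect this to be the main obstacle, since it hinges on the Sobolev exponents fitting together. Setting $\sigma_1:=s-1$ and $\sigma_2:=(3-s)/2$, both belong to $(0,1)=(0,d/2)$ while $\sigma_1+\sigma_2=(s+1)/2>1=d/2$; this is precisely where $s>1$ is used. Consequently the lower Sobolev indices $\tfrac12-\tfrac{\sigma_i}{2}$ add up to at most $\tfrac12$, so one may pick $p,q\in(2,\infty)$ with $1/p+1/q=1/2$ together with the embeddings $H^{\sigma_1}(\0)\hookrightarrow L_p(\0)$ and $H^{\sigma_2}(\0)\hookrightarrow L_q(\0)$, and conclude via Hölder's inequality that $\|fg\|_{\lt}\le C\|f\|_{H^{\sigma_1}(\0)}\|g\|_{H^{\sigma_2}(\0)}$. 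Combined with the boundedness of $T$, this proves (ii).

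Finally I would verify the mapping properties feeding into the affine structure. The constant part $A_0$ lies in $\kL(E_1,E_0)$ because $\Delta\colon H^2(\0)\to\lt$ is bounded and, by Lemma~\ref{operator_T}, $v\mapsto Tv_x$ sends $H^2(\0)$ into $H^1(\0)\hookrightarrow\lt$. For $L$, let $v\in E_1=H^2_D(\0)$. Then $v_x\in H^1(\0)\hookrightarrow H^{s-1}(\0)$ (using $s\le2$), so (i) gives $\|uv_x\|_{\lt}\le C\|u\|_{H^s(\0)}\|v\|_{H^2(\0)}$; moreover $u_y\in H^{s-1}(\0)$ and $v_x\in H^1(\0)\hookrightarrow H^{(3-s)/2}(\0)$ (using $s\ge1$), so (ii) gives $\|u_yTv_x\|_{\lt}\le C\|u\|_{H^s(\0)}\|v\|_{H^2(\0)}$. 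Hence $L\in\kL(E_{s/2},\kL(E_1,E_0))$, where $E_{s/2}=H^s_D(\0)$ by~\eqref{interpr}. Since $A=A_0+L$ is bounded and affine, its Fréchet derivative is the constant operator $L$ and all higher derivatives vanish, which gives $A\in{\rm C}^\infty(E_{s/2},\kL(E_1,E_0))$.
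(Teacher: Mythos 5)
Your proof is correct, and its overall skeleton matches the paper's: reduce the nonlocal terms via Lemma~\ref{operator_T}, establish the two bilinear multiplication bounds, and then obtain smoothness of $A$ from its affine dependence on $u$. The difference lies in how the core estimates (i) and (ii) are handled. The paper disposes of both in one line by citing Amann's pointwise multiplication theorem \cite[Theorem 4.1]{Am91}, which gives boundedness of $H^{s_1}\times H^{s_2}\to L_2$ precisely under the condition $s_1+s_2>d/2$ (with $s_1,s_2\geq 0$); your proof instead verifies these bounds by hand, using $H^{s}(\0)\hookrightarrow L_\infty(\0)$ plus H\"older for (i), and a pair of subcritical Sobolev embeddings $H^{\sigma_1}\hookrightarrow L_p$, $H^{\sigma_2}\hookrightarrow L_q$ with $1/p+1/q=1/2$ plus H\"older for (ii) --- exactly the numerology ($\sigma_1+\sigma_2=(s+1)/2>1=d/2$) that Amann's theorem encapsulates, and which is where the hypothesis $s>1$ enters. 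Your choice of exponents in (ii) is legitimate: since the sum of the lower bounds $(\tfrac12-\tfrac{\sigma_1}{2})+(\tfrac12-\tfrac{\sigma_2}{2})$ is \emph{strictly} less than $\tfrac12$, admissible $p,q$ exist and one can even stay strictly subcritical; the appeal to an extension operator for the strip is also the right justification for the embeddings. What each approach buys: the paper's is shorter and defers the analysis to a general reference, while yours is self-contained, makes the role of $s>1$ transparent, and spells out the affine structure $A=A_0+L$ with $L\in\kL(E_{s/2},\kL(E_1,E_0))$, which the paper only implicitly invokes when it calls the smoothness of $A$ a ``direct consequence.''
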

\begin{proof}
In view of Lemma~\ref{operator_T} we have $T\in\kL(H^{3/2-s}(\0))$.
Therefore it suffices to show that the bilinear operator $[(u,v)\mapsto uv]$ 
is continuous from $H^{s}(\0)\times H^{s-1}(\0)$, respectively from~${H^{s-1}(\0)\times H^{3/2-s}(\0)}$, to $L_2(\0)$.
These properties are straightforward consequences of the multiplication result stated in \cite[Theorem 4.1]{Am91}.
This proves (i) and (ii). 
Finally, the property~${A\in {\rm C}^\infty(E_{s/2}, \kL(E_1,E_0))}$ is a direct consequence of these results and \eqref{interpr}.
\end{proof}

In order to apply the theory from \cite{Am93} to the setting of \eqref{AFor}, we are still bound to show that $-A(u)$ generates an analytic semigroup in $\kL(E_0) $ for all $u\in E_{s/2}$ and $s\in(1,2)$.
\begin{lemma}\label{L:gerp} 
Given  $s\in(1,2)$ and $u\in E_{s/2}$, it holds that  $-A(u)\in\kH(E_1,E_0)$. 
\end{lemma}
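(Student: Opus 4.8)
The goal is to show that $-A(u) \in \mathcal{H}(E_1, E_0)$, i.e., that $-A(u)$ generates an analytic semigroup on $E_0 = L_2(\Omega)$ for fixed $u \in E_{s/2}$. The natural strategy is a \emph{perturbation argument}: the principal part of $-A(u)$ is $-\mu\Delta$ (with Dirichlet boundary conditions), which is a well-understood generator of an analytic semigroup, and the remaining terms should be treated as a lower-order perturbation that does not destroy this property.

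My plan is as follows. First I would recall that the Dirichlet Laplacian $-\mu\Delta$, viewed as an operator from $E_1 = H^2_D(\Omega)$ to $E_0 = L_2(\Omega)$, belongs to $\mathcal{H}(E_1, E_0)$; this is classical (it is self-adjoint, negative, and sectorial on $L_2$). It therefore suffices to control the perturbation
\[
B(u)v \coloneqq \alpha v - u v_x + u_y T v_x - \beta T v_x,
\]
so that $-A(u) = -\mu\Delta - B(u)$ remains in $\mathcal{H}(E_1, E_0)$. The key observation is that $\mathcal{H}(E_1, E_0)$ is stable under perturbations that are of \emph{lower order relative to the principal part}, in the sense of the perturbation results in \cite{Am95} (or \cite{Am93}): if $-\mu\Delta \in \mathcal{H}(E_1, E_0)$ and $B(u) \in \mathcal{L}(E_\theta, E_0)$ for some $\theta \in [0,1)$, then $-\mu\Delta - B(u) \in \mathcal{H}(E_1, E_0)$ as well. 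Thus the crux is to verify that each term of $B(u)$ maps some intermediate space $E_\theta$ with $\theta < 1$ continuously into $E_0$.

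The central step, then, is the mapping estimate. For $v \in E_1 = H^2_D(\Omega)$ one has $v_x \in H^1(\Omega) = H^{2\cdot(1/2)}_D$-type regularity, so each term involves one derivative of $v$ (or none, in the $\alpha v$ term) together with multiplication by $u$ or $u_y$, where $u \in E_{s/2} = H^s_D(\Omega)$ with $s \in (1,2)$. The terms $u v_x$ and $u_y T v_x$ involve a product of a fixed $H^{s}$- or $H^{s-1}$-function with a first derivative of $v$; using Lemma~\ref{L:c1}(i)--(ii) and Lemma~\ref{operator_T}, together with the fact that $v \mapsto v_x$ lowers regularity by one, I would show that $B(u) \in \mathcal{L}(E_\theta, E_0)$ for a suitable $\theta \in (1/2, 1)$ strictly less than $1$. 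Concretely, since the highest-order contribution requires only roughly $H^{s-1+\varepsilon}$-type regularity on $v$ rather than the full $H^2$, and since $E_\theta = H^{2\theta}_D(\Omega)$ by \eqref{interpr}, one can choose $\theta < 1$ large enough to absorb the derivative but small enough to qualify as a genuine lower-order perturbation. Given such a $\theta$, interpolation yields the smallness needed: for every $\eta > 0$ there is $C_\eta$ with $\|B(u)v\|_{E_0} \le \eta \|v\|_{E_1} + C_\eta \|v\|_{E_0}$, which is precisely the relative-boundedness condition that preserves membership in $\mathcal{H}(E_1, E_0)$.

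The main obstacle I anticipate is \emph{bookkeeping the regularity indices} so that the product terms land in $L_2(\Omega)$ while leaving a spare fractional derivative — that is, confirming that the worst term $u v_x$ (with $u \in H^s$, $s \in (1,2)$) really does define a bounded map from an intermediate space $E_\theta$ with $\theta$ \emph{strictly} below $1$ into $E_0$, rather than needing the full $H^2$-regularity of $v$. This is exactly where Lemma~\ref{L:c1} and the interpolation identity \eqref{interpr} do the work, and one must check that the choice $\alpha = s/2$ and the range $s \in (1,2)$ leave enough room. Once the relative lower-order bound is secured, invoking the perturbation theorem from \cite{Am95} to conclude $-A(u) \in \mathcal{H}(E_1, E_0)$ is routine.
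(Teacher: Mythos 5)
Your proposal is correct and follows essentially the same route as the paper: the Dirichlet Laplacian is the generator (the paper cites \cite[Theorem~13.4]{Am91}), the remaining terms are shown via Lemma~\ref{operator_T} and Lemma~\ref{L:c1} to lie in $\kL(E_\theta,E_0)$ for suitable $\theta<1$ (the paper records the explicit exponents $(5-s)/4$, $s/2$, and $1/2$), and the conclusion follows from the perturbation theorem \cite[Theorem~I.1.3.1~(ii)]{Am95}.
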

\begin{proof}
According to Lemma~\ref{operator_T} and Lemma~\ref{L:c1}, we have 
\begin{align*}
&[v\mapsto u_yTv_x]\in\kL(E_{(5-s)/4}, E_0),\\[1ex]
& [v\mapsto uv_x]\in\kL(E_{s/2}, E_0),\\[1ex]
&  [v\mapsto \alpha v-\beta Tv_x]\in\kL(E_{1/2}, E_0).
\end{align*}
The generator property $-\Delta\in\kH(E_1,E_0) $ of the Laplacian is established in \cite[Theorem~13.4]{Am91}.
Moreover, the operators considered above can be treated as  perturbations, and the desired generator result follows via \cite[Theorem~I.1.3.1~(ii)]{Am95}.
\end{proof}

\subsection{The proof of Theorem \ref{MT1}}\label{Sec:13}
Let $\wt  K$ denote the even reflection of $K$. Then, according to \eqref{Kcla}, we have $\wt  K\in {\rm C}^{1-}(\R,E_{r/2})$. 
Setting~${U\coloneqq(t,u),}$ the problem \eqref{AFor} is equivalent to the autonomous quasilinear evolution problem
\begin{equation}\label{AForG}
 \frac{{\rm d}U(t)}{{\rm d}t} =B(U(t))U(t) + f(U(t)), \quad t>0, \qquad  U(0) = (0,u_0),
\end{equation}
where
\[
B(U)=\begin{pmatrix}
0&0\\[1ex]
0&A(u)
\end{pmatrix}\qquad\text{and}\qquad 
f(U)\coloneqq
\begin{pmatrix}
1\\[1ex]
\wt K(t)
\end{pmatrix}.
\]
Let $s\in(1,2)$, $r\in(0,s)\setminus\{1/2\} $, and choose $\ov s\in(1,s)$.
Moreover, we define $F_i\coloneqq\R\times E_i $ for~${i\in\{0,\,1\}}$ and we set
\[
F_\theta\coloneqq[F_0,F_1]_\theta=\R\times E_\theta,\qquad\theta\in(0,1),
\]
see \cite[Proposition~I.2.3.3]{Am95}. Our assumption~\eqref{Kcla}, Lemma~\ref{L:c1},  Lemma~\ref{L:gerp}, and \cite[Corollary~I.1.6.3]{Am95}, imply that
\[
-B\in{\rm C}^\infty(F_{\ov s/2},\kH(F_1,F_0)) \qquad\text{and}\qquad f\in{\rm C}^{1-}(F_{\ov s/2},F_{r/2}). 
\]
Hence, we are in a position to apply the quasilinear parabolic theory from \cite{Am95} (see \cite[Theorem~12.1]{Am95} or \cite[Theorem 1.1]{MW20}) to \eqref{AForG}
 and establish in this way the claims of Theorem~\ref{MT1}.

%%%%%%%%%%%%%%%%%%%%%%%%%%%%%%%%%%%%%%%%%%%%%%%%%%
%%%%%%%%%%%%%%%%%%%%%%%%%%%%%%%%%%%%%%%%%%%%%%%%%%
%%%%%%%%%%%%%%%%%%%%%%%%%%%%%%%%%%%%%%%%%%%%%%%%%%
%%%%%%%%%%%%%%%%%%%%%%%%%%%%%%%%%%%%%%%%%%%%%%%%%%
 \section{Existence of  global weak solutions to \eqref{PB'}}\label{Sec:2}
%%%%%%%%%%%%%%%%%%%%%%%%%%%%%%%%%%%%%%%%%%%%%%%%%%
%%%%%%%%%%%%%%%%%%%%%%%%%%%%%%%%%%%%%%%%%%%%%%%%%%
%%%%%%%%%%%%%%%%%%%%%%%%%%%%%%%%%%%%%%%%%%%%%%%%%%
%%%%%%%%%%%%%%%%%%%%%%%%%%%%%%%%%%%%%%%%%%%%%%%%%%

In terms of the variable $w$ defined in \eqref{subst}, the problem  \eqref{PB'} may be reformulated as the system 
\begin{equation}\label{PB''}
\left.
\begin{array}{rrrr}
w_t=\mu\Delta w -e^{\gamma t}w w_x +e^{\gamma t} w_yTw_x+(\alpha-\gamma) w-\beta Tw_x+e^{-\gamma t}K \quad \text{in $\0,$ $t>0,$} \\[1ex]
 w=0\quad \text{on $\p\0$, \, $t>0$,}\\[1ex]
 w(0)=u_0.
\end{array}
\right\}
\end{equation}
We note that  the positive constant $\gamma$ is defined in \eqref{gamma} in such a way that the $L_2$-energy functional is non-increasing along (classical) solutions to \eqref{PB''} when $K=0$.
Indeed, for~${t>0,}$ we have
\begin{align*}
\frac{{\rm d}\E(w(t))}{{\rm d}t}&=\int_\0 w(t)\p_tw(t)\dz\\[1ex]
&=\int_\0 w(t) \big[\mu\Delta w -e^{\gamma t}w w_x +e^{\gamma t} w_yTw_x+(\alpha-\gamma) w-\beta Tw_x+e^{-\gamma t}K\big](t)\dz\\[1ex]
&=\int_\0 \big[-\mu|\nabla w| ^2 + (\alpha-\gamma) w^2-\beta wTw_x+e^{-\gamma t}Kw\big](t)\dz.
\end{align*}
In view of Lemma~\ref{operator_T}, H\"older's inequality, and Young's inequality  we estimate
\[
\|wTw_x\|_1\leq \|w\|_2\|Tw_x\|_2\leq\|w\|_2\|\nabla w\|_2 \leq \frac{|\beta|\|w\|_2^2}{2\mu}+\frac{\mu\|\nabla w\|_2^2}{2|\beta|},
\]
and 
\[
\|e^{-\gamma t}Kw\|_1 \leq \frac{\|w\|_2^2}{2} +\frac{\|K\|_2^2}{2},
\]
and therewith we conclude that 
\begin{align*}
\frac{{\rm d}\E(w(t))}{{\rm d}t}+\frac{1}{2}\int_\0 \big[\mu|\nabla w| ^2 + w^2\big](t)\dz\leq\frac{1}{2}\int_\0 K^2(t)\dz.
\end{align*}

This energy inequality is at the base of our construction of weak solutions to \eqref{PB''}, as stated below.
\begin{thm}\label{MT2}
Given   $u_0\in L_2(\0)$ and $K=K(t,z)$ that satisfies \eqref{Kwek}, there exists a global weak solution $w$  to~\eqref{PB''} with the following properties:
 \begin{itemize} 
 \item[(i)] for all $t>0$ we have $w\in  L_\infty(0,t;L_2(\0))\cap  L_2(0,t;H^1(\0));$
 \item[(ii)] for all $t>0$ and all $\xi\in {\rm C}^\infty_0(\0)$ it holds that
 \begin{equation}\label{Eq0}
 \begin{aligned}
&\int_{\0} w(t)\xi\dz-\int_{\0} u_{0} \xi\dz \\[1ex]
&\qquad+\int_0^t\int_{\0}\big[\mu\nabla  w \cdot\nabla \xi+e^{ \gamma \tau}\p_y\xi w T\p_xw  +2e^{ \gamma \tau}\xi w\p_xw\\[1ex]
&\hspace{2.4cm}-(\alpha-\gamma) w\xi+\beta \xi T\p_xw-e^{-\gamma \tau}K\xi\big] \dz\dtau=0 ; \\[1ex]
 \end{aligned}
 \end{equation}
  \item[(iii)] for almost all $t>0$  
\begin{equation}\label{EnEs}
\E(w(t))+\frac{1}{2}\int_0^t\int_{\0}\mu |\nabla w|^2+w^2\dz\dtau\leq  \E(u_0) +  \frac{1}{2}\int_0^t\int_{\0}K^2\dz\dtau. 
\end{equation}
 \end{itemize}
\end{thm}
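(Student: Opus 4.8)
The plan is to construct the weak solution $w$ to \eqref{PB''} via a two-step approximation. First I would fix a large integer $N\geq 1$, introduce the bounded rectangle $\0^N=(-2^N,2^N)\times(0,1)$, replace $K$ by a suitable regularization $K^N$ (for instance truncated and mollified so that $K^N\to K$ in $L_2(0,t;L_2(\0))$), and pose the problem \eqref{PB''} on $\0^N$ with homogeneous Dirichlet conditions on $\partial\0^N$. On this bounded domain the embedding $H^1_0(\0^N)\hookrightarrow L_2(\0^N)$ is compact, which is exactly what is needed to run a Galerkin scheme. I would choose an orthonormal basis $(e_k)$ of $L_2(\0^N)$ consisting of eigenfunctions of the Dirichlet Laplacian, seek finite-dimensional approximations $w^N_m=\sum_{k=1}^m c_k(t)e_k$, and project the equation onto $\mathrm{span}(e_1,\dots,e_m)$. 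This yields a system of ODEs for the $c_k$; local existence follows from the Picard--Lindel\"of theorem, and the crucial point is that the energy estimate \eqref{Eneqw} (which holds at the Galerkin level because testing against $w^N_m$ makes the nonlinear terms drop out, exactly as in the computation preceding the theorem) provides an a priori $L_\infty(0,t;L_2)\cap L_2(0,t;H^1)$ bound uniform in $m$, hence global existence and a weakly(-$*$) convergent subsequence $w^N_m\to w^N$ as $m\to\infty$. This construction is what I expect to be summarized in Proposition~\ref{P:1}.

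The second step is to pass $N\to\infty$. The energy estimate \eqref{Eneqw}, applied now to each $w^N$ on $\0^N$ (extended by zero to all of $\0$), furnishes bounds
\[
\sup_N \|w^N\|_{L_\infty(0,t;L_2(\0))}<\infty,\qquad \sup_N\|w^N\|_{L_2(0,t;H^1(\0))}<\infty
\]
that are uniform in $N$, since the right-hand side of \eqref{Eneqw} is controlled by $\E(u_0)$ and $\|K\|_{L_2(0,t;L_2(\0))}^2$ independently of $N$. By Banach--Alaoglu I would extract a subsequence with $w^N\rightharpoonup w$ weakly in $L_2(0,t;H^1(\0))$ and weak-$*$ in $L_\infty(0,t;L_2(\0))$. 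To pass to the limit in the nonlinear terms $e^{\gamma\tau}wTw_x$ and $e^{\gamma\tau}ww_x$, weak convergence alone is insufficient, so I would establish a uniform bound on $\p_t w^N$ in some negative-order space $L_2(0,t;H^{-1}(\0^N))$ (or a weighted dual, to accommodate the unbounded strip) directly from the equation \eqref{PB''} together with Lemma~\ref{operator_T} and the multiplication estimates of Lemma~\ref{L:c1}. The Aubin--Lions--Simon lemma then yields strong convergence of $w^N\to w$ in $L_2(0,t;L_2(K))$ on every bounded set $K\subset\0$, which suffices to identify the limits of the quadratic nonlinearities after integrating by parts (noting the test-function support condition in (ii), which localizes everything to a fixed rectangle). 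This identification of the nonlinear limits is the main obstacle: the unboundedness of $\0$ in the $x$-direction means compactness is only local, so I would carefully use the compact-support hypothesis on the test functions to restrict all the limiting computations to a fixed bounded rectangle on which Aubin--Lions applies, and use the uniform tail bounds coming from the energy estimate to discard contributions outside it.

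Having obtained $w$ satisfying (i) and (ii), the energy inequality (iii) is recovered from weak lower semicontinuity. The estimate \eqref{Eneqw}, integrated in time, holds for each approximation $w^N$ with the right-hand side involving $K^N$; since $\E$ and the $H^1$-seminorm are convex and continuous, they are weakly (sequentially) lower semicontinuous, so passing to the liminf along the convergent subsequence yields \eqref{EnEs} for almost every $t$, using $\E(w^N(0))=\E(u_0)$ and $K^N\to K$ in $L_2(0,t;L_2(\0))$. Finally I would return to the original unknown via $u(t,z)=e^{\gamma t}w(t,z)$: substituting $w=e^{-\gamma t}u$ into \eqref{Eq0} and using $\p_t(e^{-\gamma t}u)$ recovers the weak formulation (ii) of Theorem~\ref{MT3} after testing against time-dependent $\phi$ and integrating by parts in $\tau$, while \eqref{EnEs} becomes exactly \eqref{EnE} since $\E(w(t))=\E(e^{-\gamma t}u(t))=\E(u(t)e^{-\gamma t})$ and the integrand $\mu|\nabla w|^2+w^2=e^{-2\gamma\tau}(\mu|\nabla u|^2+u^2)$. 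This last conversion is purely algebraic and completes the reduction of Theorem~\ref{MT3} to Theorem~\ref{MT2}.
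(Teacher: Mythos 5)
Your overall strategy coincides with the paper's: a two-step approximation (Galerkin scheme on the rectangles $\0^N$ with a mollified $K$, then $N\to\infty$), uniform bounds from the energy inequality \eqref{Eneqw}, a time-derivative bound in a negative-order space feeding into Aubin--Lions--Simon compactness, locality of the test functions to handle the unbounded strip, and weak lower semicontinuity for the energy inequality; this is precisely the architecture of Proposition~\ref{P:1} and Section~\ref{SS:22}. However, two of your concrete claims would fail as stated. First, at the Galerkin level you assert that the energy bound alone yields a weakly(-$*$) convergent subsequence $w^N_m\to w^N$ and that this suffices to produce the weak solution of Proposition~\ref{P:1}; weak convergence cannot identify the limits of the quadratic terms $ww_x$ and $w_yTw_x$, so the full compactness machinery (time-derivative bound plus Aubin--Lions--Simon, i.e.\ the analogue of Lemma~\ref{L:4} and the convergences \eqref{conv1}--\eqref{conv4}) is already needed for the limit $m\to\infty$, not only for $N\to\infty$.

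Second, the specific space $L_2(0,t;H^{-1}(\0^N))$ you propose for $\p_t w^N$ is not attainable from the available bounds. The obstruction is the nonlocal term: under the energy bounds one has $w\in L_\infty(0,t;L_2)\cap L_2(0,t;H^1)$, hence $w\in L_4((0,t)\times\0^N)$ by interpolation, while $T\p_xw$ is only in $L_2$; thus $wT\p_xw$ lies merely in $L_{4/3}$ in space-time, and pairing it with $\p_y\xi$ requires $\xi\in W^1_4(\0^N)$. Since $H^1\not\hookrightarrow W^1_4$ in two dimensions, $H^{-1}$-duality is unavailable; one must test with $H^2_D(\0^N)\hookrightarrow W^1_4(\0^N)$ and accept the exponent $4/3$ in time, i.e.\ the bound \eqref{estder} in $L_{4/3}(0,t;H^2_D(\0^N)')$ that the paper establishes (Simon's compactness criterion \cite{Si87} tolerates this exponent, so the rest of your argument goes through unchanged). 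With these two corrections, your plan matches the paper's proof; the remaining ingredients you describe -- strong$\times$weak convergence for the nonlinear and nonlocal terms on a fixed rectangle (weak continuity of the bounded operator $T$ handles $T\p_xw^N$), liminf in the energy estimate followed by exhaustion of $\0$ by the $\0^M$, and the purely algebraic passage $u=e^{\gamma t}w$ back to Theorem~\ref{MT3} -- are all sound and are carried out the same way in the paper.
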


In order to prove Theorem~\ref{MT2}, in the sequel we fix~${u_0\in L_2(\0)}$ and $K=K(t,z)$, which satisfies \eqref{Kwek},
and we  set 
\begin{equation}\label{kt}
k(t)\coloneqq\|K\|_{L_2(0,t+1; L_2(\0))}^2,\qquad t\geq0.
\end{equation}
For later purposes we need to regularize $K$.
To this end we set  
\[
\wt K(t,z)\coloneqq
\left\{
\begin{array}{clll}
K(t,z)&,& (t,z)\in[0,\infty)\times\0, \\[1ex]
0&,&(t, z)\in\R^3\setminus\big([0,\infty)\times\0\big).
\end{array}
\right.
\]
Then $\wt K\in L_2((-t,t)\times\R^2)$ for all $t>0$.
Given $1\leq n\in \N$ we further set 
\begin{equation}\label{Kn}
K_n\coloneqq\wt K*\rho_{1/n},
\end{equation} where $(\rho_\e)_{\e>0} $ is a standard mollifier in $\R^3$.
Then $K_n\in{\rm C}^\infty(\R^3)$ satisfies
\begin{equation}\label{Pro1}
K_n\to K \qquad\text{in $L_2(0,t, L_2(\0))$ for all $t>0$} 
\end{equation}
and 
\begin{equation}\label{Pro2}
\|K_n\|_{L_2(0,t, L_2(\0))}^2\leq k(t)\qquad\text{for all $t>0$ and $1\leq n\in\N$}. 
\end{equation}

In a first step we consider  the problem \eqref{PB''} on the rectangle~${\0^N\coloneqq(-2^N,2^N)\times(0,1)}$ with $N\geq 1$ (see \eqref{PBN})
 and construct  via a Galerkin scheme a weak solution $w^N$  to \eqref{PBN} (see Proposition~\ref{P:1} below).
Then, in Section~\ref{SS:22}, we prove that the sequence   $(w^N)_N$ of weak solutions to \eqref{PBN} converges, in a suitable sense, towards a weak solution to \eqref{PB''}.

\subsection{A Galerkin scheme in bounded domains}\label{SS:21} 

In this section we consider the problem
\begin{equation}\label{PBN}
\left.
\begin{array}{rrrr}
w_t=\mu\Delta w -e^{\gamma t}w w_x +e^{\gamma t} w_yTw_x+(\alpha-\gamma) w-\beta Tw_x+e^{-\gamma t}K \quad \text{in $\0^N,$ $t>0,$} \\[1ex]
 w=0\quad \text{on $\p\0^N$, \, $t>0$,}\\[1ex]
 w(0)=u_0^N\coloneqq u_0|_{\0^N}, 
\end{array}
\right\}
\end{equation}
where $\0^N$ is  the rectangle 
\[
 \0^N\coloneqq(-2^N, 2^N)\times (0,1), \qquad 1\leq N\in\N,
\]
and we  use a Galerkin scheme to construct a global weak solution $w^N$ to \eqref{PBN}, as stated in Proposition~\ref{P:1}.
Before stating this result we introduce the energy functional $\E_N$ by the formula
\begin{equation}\label{E_N}
\E_N(u)\coloneqq\frac{1}{2}\int_{\0^N}u^2\dz.
\end{equation}

\begin{prop}\label{P:1} Given $u_0\in L_2(\0)$, $K=K(t,z)$ satisfying \eqref{Kwek},   and $1\leq N\in\N$, 
there exists a global weak solution $w^N$  to~\eqref{PBN} with the following properties:
 \begin{itemize} 
 \item[(i)] for all $t>0$ 
  \[
w^N\in  L_\infty(0,t;L_2(\0^N))\cap  L_2(0,t;H^1(\0^N)) \cap W^1_{4/3}(0,t; H_D^2(\0^N)'),
 \]
 where $H^{2}_D(\0^N)\coloneqq \{u \in H^{2}(\0^N)\,:\, u=0 \,\text{ on $\p\0^N$}\}$;
 \item[(ii)] for all $t>0$ and all $\xi\in H^2_D(\0^N)$ we have
 \begin{equation}\label{Equ1}
 \begin{aligned}
 \begin{aligned}
\int_{\0^N} w^N(t)\xi\dz&=\int_{\0^N} u_{0}^N \xi\dz \\[1ex]
&\quad +\int_0^t\int_{\0^N}\big[-\mu\nabla  w^N \cdot\nabla \xi- e^{ \gamma \tau}\p_y\xi w^N T\p_xw^N  -2e^{ \gamma \tau}\xi w^N\p_xw^N\\[1ex]
 &\hspace{2.35cm}+(\alpha-\gamma) w^N\xi-\beta \xi T\p_xw^N+e^{-\gamma \tau}K\xi\big] \dz\dtau;
 \end{aligned}
 \end{aligned}
 \end{equation}
  \item[(iii)] for almost all $t>0$ we have
  \begin{equation}\label{EnEs0}
 \E_N(w^N(t))+\frac{1}{2}\int_0^t\int_{\0^N}\mu |\nabla w^N|^2+|w^N|^2\dz\dtau\leq\E(u_0) + \frac{1}{2}\int_0^t\int_{\0^N}K^2\dz\dtau.
 \end{equation}

 \end{itemize}
 \end{prop}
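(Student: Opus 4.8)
The plan is to construct the weak solution $w^N$ to \eqref{PBN} via a Galerkin approximation, exploiting the fact that $\0^N$ is a bounded domain so that the Dirichlet Laplacian has a complete orthonormal system of eigenfunctions. First I would fix an $L_2(\0^N)$-orthonormal basis $(\varphi_j)_{j\geq 1}$ consisting of eigenfunctions of $-\Delta$ on $\0^N$ with homogeneous Dirichlet boundary conditions; these lie in $H^2_D(\0^N)$ and are orthogonal in $H^1_0(\0^N)$ as well. For each $m\geq 1$ I seek an approximate solution $w^N_m(t)=\sum_{j=1}^m d_j^m(t)\varphi_j$ satisfying the finite-dimensional system obtained by testing \eqref{PBN} against $\varphi_1,\dots,\varphi_m$:
\begin{equation*}
\big(\p_t w^N_m,\varphi_j\big)_{L_2(\0^N)}=\big(\mu\Delta w^N_m-e^{\gamma t}w^N_m\p_xw^N_m+e^{\gamma t}\p_yw^N_m\,T\p_xw^N_m+(\alpha-\gamma)w^N_m-\beta T\p_xw^N_m+e^{-\gamma t}K,\varphi_j\big)_{L_2(\0^N)},
\end{equation*}
with $w^N_m(0)$ the $L_2$-projection of $u_0^N$ onto $\mathrm{span}\{\varphi_1,\dots,\varphi_m\}$. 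The right-hand side is a polynomial (quadratic) vector field in the coefficients $d^m=(d_1^m,\dots,d_m^m)$ with a time-dependent but locally bounded forcing, so the Picard--Lindel\"of theorem yields a unique local $\mathrm{C}^1$ solution on a maximal interval $[0,T_m)$.

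The crucial step is the a priori energy estimate that guarantees $T_m=\infty$ and provides the uniform bounds needed to pass to the limit. Here I would multiply the $j$-th equation by $d_j^m(t)$ and sum over $j$, which is exactly testing against $w^N_m$ itself. The key structural observation, already exploited in the derivation of \eqref{Eneqw}, is that the two nonlinear contributions combine and cancel: using $(Tw)_y=w$, integration by parts on $\0^N$ together with the boundary condition shows that the genuinely quadratic terms $-e^{\gamma t}(w^N_m\p_xw^N_m,w^N_m)$ and $e^{\gamma t}(\p_yw^N_m\,T\p_xw^N_m,w^N_m)$ sum to zero, so no nonlinear term survives in the energy balance. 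What remains is precisely the linear computation already carried out in Section~\ref{Sec:2}: estimating $\|w^N_mT\p_xw^N_m\|_1$ by Lemma~\ref{operator_T} and Young's inequality, and the forcing term $\|e^{-\gamma t}Kw^N_m\|_1$, one arrives at
\begin{equation*}
\frac{{\rm d}\E_N(w^N_m(t))}{{\rm d}t}+\frac12\int_{\0^N}\big(\mu|\nabla w^N_m|^2+|w^N_m|^2\big)\dz\leq\frac12\int_{\0^N}K^2(t)\dz.
\end{equation*}
Integrating in time and using \eqref{Pro2} yields uniform bounds for $(w^N_m)_m$ in $L_\infty(0,t;L_2(\0^N))\cap L_2(0,t;H^1(\0^N))$, independent of $m$; the global-in-time existence $T_m=\infty$ follows since the coefficients cannot blow up in finite time. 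I expect this cancellation of the nonlinearity, which is the whole point of the change of variable \eqref{subst}, to be the decisive ingredient rather than an obstacle.

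From these bounds I would extract a subsequence with $w^N_m\rightharpoonup w^N$ weakly in $L_2(0,t;H^1(\0^N))$ and weak-$*$ in $L_\infty(0,t;L_2(\0^N))$. To pass to the limit in the nonlinear terms I need strong $L_2$-compactness in space-time, which is where the time-derivative bound enters: testing against $\varphi_j$ and using the uniform $H^1$-bound together with Lemma~\ref{operator_T} and the multiplication estimate of Lemma~\ref{L:c1}, the quadratic terms are controlled in $L_{4/3}(0,t;H^2_D(\0^N)')$ (the exponent $4/3$ arises from the product of two $L_4(0,t;L_4)$-type factors controlled by the $L_\infty L_2\cap L_2H^1$ bound via Ladyzhenskaya's inequality in two dimensions), so $(\p_t w^N_m)_m$ is bounded in $L_{4/3}(0,t;H^2_D(\0^N)')$. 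The Aubin--Lions lemma then gives $w^N_m\to w^N$ strongly in $L_2(0,t;L_2(\0^N))$, which suffices to identify the weak limits of the bilinear terms and to establish the weak formulation \eqref{Equ1} for test functions $\xi=\varphi_j$, hence by density for all $\xi\in H^2_D(\0^N)$. The main technical obstacle will be making the time-derivative estimate precise enough to land in $W^1_{4/3}(0,t;H^2_D(\0^N)')$, since one must carefully bound the nonlinear products using two-dimensional Sobolev embeddings; the energy inequality \eqref{EnEs0} for the limit then follows from \eqref{Pro1}, the strong convergence, and weak lower semicontinuity of the norms, after an argument recovering the pointwise-in-time inequality for almost every $t$.
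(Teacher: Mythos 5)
Your construction follows the paper's own proof almost step for step: the same Dirichlet--Laplacian eigenfunction basis, the same Galerkin ansatz, the same cancellation of the quadratic terms in the energy identity leading to the bound \eqref{EnEs1}, the same $L_{4/3}(0,t;H^2_D(\0^N)')$ estimate for the time derivative via two-dimensional interpolation/Ladyzhenskaya, and the same Aubin--Lions/Simon compactness argument (\cite{Si87}) to pass to the limit. However, there is one genuine gap at the very first step.

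You claim the Galerkin ODE system has ``a time-dependent but locally bounded forcing,'' so that Picard--Lindel\"of yields a unique local ${\rm C}^1$ solution. This is false under the standing assumption \eqref{Kwek}: $K$ is only $L_2$ in time, so the forcing $t\mapsto\int_{\0^N}e^{-\gamma t}K(t,z)\varphi_j(z)\dz$ is merely an $L_2$-function of $t$ --- in general neither locally bounded nor continuous --- and the classical Picard--Lindel\"of theorem does not apply; no ${\rm C}^1$ solution can be expected. The paper avoids exactly this issue by first mollifying the forcing: it replaces $K$ by $K_n\coloneqq\wt K*\rho_{1/n}\in{\rm C}^\infty(\R^3)$ (see \eqref{Kn}), solves the Galerkin system with $K_n$ (so that the vector field $\Psi^n$ is smooth and standard ODE theory applies), and only recovers $K$ in the limit, using the convergence \eqref{Pro1} and the uniform bound \eqref{Pro2} in the energy estimate and in the passage to the limit in \eqref{jjj}. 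You could alternatively keep $K$ and invoke Carath\'eodory's existence theorem for ODEs with $L_1$-in-time right-hand sides, obtaining an absolutely continuous solution and justifying the energy identity almost everywhere in time, but some such device is indispensable; as written, your first step fails for the admissible class of forcings. A secondary point you gloss over: both your time-derivative estimate (where a general $\xi\in H^2_D(\0^N)$ must be replaced by its projection onto $V_m$ with a \emph{uniform} $H^2$-bound) and your final ``by density'' argument require that the spectral projections be bounded on $H^2_D(\0^N)$ and that the eigenfunctions span a dense subspace in the $H^2$-topology; this is the content of the paper's Lemma~\ref{L:3}, which rests on the convexity of the rectangle $\0^N$ and elliptic regularity, and it should be stated rather than assumed.
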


 The proof of Proposition~\ref{P:1} is postponed to the end of this section, as it requires some preparation.
We first collect in Lemma~\ref{L:3} 
some classical results related to the spectrum of the Laplace operator with homogeneous Dirichlet boundary conditions.

\begin{lemma}\label{L:3} Given $1\leq N\in\N$, let $-\lambda_k^N$, $1\leq k\in\N$, with
\[
0<\lambda_1^N\leq \lambda_2^N\leq \ldots
\] 
denote the eigenvalues of the Dirichlet Laplacian  and let $\phi^N_k$ denote an  eigenfunction corresponding to the eigenvalue $\lambda_k^N$, $k\geq 1$. 
Then $\lambda_k^N\to\infty $ for $k\to\infty$ and the eigenfunctions can be chosen such that the set $\{\phi^N_k\, :\, k\geq 1\} $ is an orthonormal basis of $L_2(\0).$
Furthermore, letting  
\[
u_n\coloneqq\sum_{k=1}^n\langle u|\phi^N_k\rangle_2 \phi^N_k,
\]
where $\langle \cdot|\cdot\rangle_2 $ is the $L_2(\0^N)$-scalar product, it holds that 
\begin{equation}\label{Eq2}
u_n\underset{n\to\infty}\to u \qquad \text{in $H^2(\0^N)$}
\end{equation}
provided that $u\in H^{2}_D(\0^N)$.
\end{lemma}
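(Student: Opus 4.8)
The plan is to establish the two claims of Lemma~\ref{L:3} separately, relying on the classical spectral theory of the Dirichlet Laplacian on a bounded domain. For the first part, recall that on the bounded rectangle~$\0^N$ the operator $-\Delta$ with homogeneous Dirichlet boundary conditions is a positive, self-adjoint operator on $L_2(\0^N)$ with compact resolvent; this follows from the compactness of the embedding $H^1_0(\0^N)\hookrightarrow L_2(\0^N)$ (Rellich--Kondrachov), which holds since $\0^N$ is bounded. The spectral theorem for operators with compact resolvent then yields a sequence of eigenvalues $\lambda_k^N$ accumulating only at $+\infty$, so $\lambda_k^N\to\infty$, together with an associated orthonormal basis of eigenfunctions $\{\phi_k^N\}$ of $L_2(\0^N)$; positivity of the quadratic form $u\mapsto\int_{\0^N}|\nabla u|^2\,\dz$ on $H^1_0(\0^N)\setminus\{0\}$ gives $\lambda_1^N>0$. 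For the explicit rectangle one may alternatively exhibit the eigenfunctions directly by separation of variables as products $\sin(k_1\pi(x+2^N)/2^{N+1})\sin(k_2\pi y)$, which makes the orthonormal basis property and the eigenvalue asymptotics completely transparent.

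For the second part, the convergence~\eqref{Eq2} in $H^2(\0^N)$ for $u\in H^2_D(\0^N)$, the strategy is to identify the Galerkin truncations $u_n$ as the orthogonal projections of $u$ onto the span of the first $n$ eigenfunctions, and then to upgrade the automatic $L_2$-convergence to $H^2$-convergence using that the $\phi_k^N$ diagonalize $-\Delta$. The key point is that, for $u\in H^2_D(\0^N)$ (which is the natural form domain enhanced with the boundary condition), one has the expansion $-\Delta u=\sum_k\lambda_k^N\langle u|\phi_k^N\rangle_2\,\phi_k^N$ with $\sum_k(\lambda_k^N)^2|\langle u|\phi_k^N\rangle_2|^2=\|\Delta u\|_2^2<\infty$, so that $-\Delta u_n=\sum_{k=1}^n\lambda_k^N\langle u|\phi_k^N\rangle_2\,\phi_k^N$ converges to $-\Delta u$ in $L_2(\0^N)$. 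Since $u_n-u$ again lies in $H^2_D(\0^N)$ and vanishes on $\p\0^N$, elliptic regularity for the Dirichlet Laplacian on the rectangle gives a bound $\|u_n-u\|_{H^2(\0^N)}\leq C(\|\Delta(u_n-u)\|_2+\|u_n-u\|_2)$, and the right-hand side tends to zero; this yields~\eqref{Eq2}.

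I expect the main obstacle to be the rigorous justification of the $H^2$-elliptic-regularity estimate together with the claim that the eigenfunction expansion of $-\Delta u$ converges in $L_2$ precisely when $u\in H^2_D(\0^N)$. One must be careful that membership in $H^2_D(\0^N)$ is exactly the condition guaranteeing $\sum_k(\lambda_k^N)^2|\langle u|\phi_k^N\rangle_2|^2<\infty$; this is where the Dirichlet boundary condition enters, since without it the series expansion of the Laplacian picks up boundary contributions. The cleanest way around this is to note that $H^2_D(\0^N)$ coincides with the domain of the self-adjoint realization of $-\Delta$ (this is standard for a rectangle, where corners cause no loss of $H^2$-regularity for the Dirichlet problem), on which $-\Delta$ acts as the multiplier $\lambda_k^N$ in the eigenbasis; the desired convergence in the graph norm of $-\Delta$, which is equivalent to the $H^2$-norm on this domain by elliptic regularity, then follows immediately from Parseval. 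I would cite a standard reference for the $H^2$-regularity of the Dirichlet problem on convex polygonal domains rather than prove it from scratch.
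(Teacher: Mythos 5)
Your proof is correct and takes essentially the same route as the paper: both arguments identify the eigenfunction coefficients of $\Delta u$ via integration by parts (exploiting that $u$ and the $\phi_k^N$ satisfy Dirichlet conditions, so $\langle \Delta u|\phi_k^N\rangle_2=-\lambda_k^N\langle u|\phi_k^N\rangle_2$), deduce $\Delta u_n\to\Delta u$ in $L_2(\0^N)$, and then upgrade to $H^2$-convergence via elliptic regularity on the convex domain $\0^N$ --- which the paper phrases as the isomorphism $1-\Delta\colon H^2_D(\0^N)\to L_2(\0^N)$ and you phrase as the equivalent a priori estimate $\|v\|_{H^2(\0^N)}\leq C\bigl(\|\Delta v\|_{2}+\|v\|_{2}\bigr)$ for $v\in H^2_D(\0^N)$.
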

\begin{proof}
It is well-known that $u_n\to u$ in $L_2(\0^N)$.
Hence, in view of  $\Delta u\in L_2(\0^N)$, we also have that 
\[
(\Delta u)_n =\sum_{k=1}^n\langle \Delta u|\phi^N_k\rangle_2 \phi^N_k\underset{n\to\infty}\to\Delta u \qquad\text{in $L_2(\0^N)$}.
\] 
 Since $u\in H^{2}_D(\0^N),$ Stokes' theorem yields
\[
\langle \Delta u|\phi^N_k\rangle_2=\langle u| \Delta\phi^N_k\rangle_2=-\lambda_k^N\langle u| \phi^N_k\rangle_2, \quad k\geq1,
\]
which shows that $(\Delta u)_n=\Delta  u_n .$
Hence $(1-\Delta) u_n \to(1-\Delta) u$ for $n\to\infty$ in $L_2(\0^N). $ 
Moreover, since $\0^N$ is convex,  $1-\Delta:H^2_D(\0^N)\to L_2(\0^N)$ is an isomorphism (see e.g. \cite[Regularity Theorem 7.2]{Br07}) and we may conclude that \eqref{Eq2} holds true.
\end{proof}

 The operator $T$ possesses similar properties as in Lemma~\ref{operator_T} when acting upon functions defined on $\0^N$.
\begin{lemma}\label{L:1'}
Given $N\geq 1,$ it holds that   $\|T\|_{\kL(L_2(\0^N))}\leq 1$.
\end{lemma}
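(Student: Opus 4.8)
The plan is to establish $\|T\|_{\kL(L_2(\0^N))}\leq 1$ by reproducing the argument used for the $s=0$ case in Lemma~\ref{operator_T}, now adapted to the bounded rectangle $\0^N=(-2^N,2^N)\times(0,1)$. The key observation is that the vertical extent of $\0^N$ is still the unit interval $(0,1)$, exactly as for the strip $\0$, so the crucial bound $\int_0^y(\cdot)\ds\leq\int_0^1(\cdot)\ds$ goes through unchanged.

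First I would take $u\in {\rm C}^\infty_0(\0^N)$ and estimate pointwise, using the Cauchy--Schwarz (H\"older) inequality in the $s$-variable over $(0,y)\subset(0,1)$, followed by Fubini's theorem. Concretely, writing $z=(x,y)$ with $x\in(-2^N,2^N)$ and $y\in(0,1)$, one has
\[
|Tu(z)|^2=\Big|\int_0^y u(x,s)\ds\Big|^2\leq y\int_0^y u^2(x,s)\ds\leq\int_0^1 u^2(x,s)\ds,
\]
and integrating over $\0^N$ together with Fubini gives
\[
\|Tu\|_{L_2(\0^N)}^2\leq\int_{\0^N}\int_0^1 u^2(x,s)\ds\dz=\int_{-2^N}^{2^N}\!\!\int_0^1\!\!\int_0^1 u^2(x,s)\ds\,\mathrm{d}y\,\mathrm{d}x=\|u\|_{L_2(\0^N)}^2.
\]
This yields $\|Tu\|_{L_2(\0^N)}\leq\|u\|_{L_2(\0^N)}$ on the dense subspace ${\rm C}^\infty_0(\0^N)$.

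Finally I would conclude by density: since ${\rm C}^\infty_0(\0^N)$ is dense in $L_2(\0^N)$ and the inequality just derived shows $T$ is a bounded (hence uniformly continuous) linear map on this dense subspace, it extends uniquely to a bounded operator on all of $L_2(\0^N)$ with the same norm bound. Thus $T\in\kL(L_2(\0^N))$ with $\|T\|_{\kL(L_2(\0^N))}\leq 1$, as claimed.

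I do not expect any genuine obstacle here; the statement is essentially the $s=0$ part of Lemma~\ref{operator_T} transported verbatim to the rectangle. The only point requiring a moment's care is to notice that the replacement of the unbounded strip by a bounded rectangle affects only the horizontal variable $x$, over which $T$ acts as the identity, while the vertical integration defining $T$ is unchanged; consequently the constant $1$ in the bound is preserved and the proof is a direct transcription.
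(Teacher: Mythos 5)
Your proof is correct and is essentially the paper's own argument: the paper proves this lemma by simply noting that one can argue as in the $s=0$ case of Lemma~\ref{operator_T}, which is exactly the H\"older--Fubini--density computation you carried out, and your observation that the vertical extent $(0,1)$ is unchanged on $\0^N$ is precisely why the constant $1$ survives.
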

\begin{proof}
 The claim follows by arguing as in Lemma~\ref{operator_T}.
\end{proof}

In the following $1\leq N\in\N$ is fixed. Given $1\leq n\in\N$, we  set $V_n\coloneqq{\rm span}\{\phi_1^N,\ldots,\phi_n^N\}$ and we define
\[
u_{0,n}\coloneqq\sum_{k=1}^n\omega_k \phi^N_k\in V_n, \qquad \text{where $\omega_k\coloneqq\langle u_0|\phi^N_k\rangle_2$,\, $1\leq k\leq n$.}
\]
We then have
\begin{equation}\label{estu0}
\|u_{0,n}\|_{L_2(\0^N)}\leq \|u_0\|_{L_2(\0^N)}\leq \|u_0\|_{L_2(\0)},\qquad n\geq1,
\end{equation}
and, moreover,
\begin{equation}\label{conv0}
u_{0,n}\to u_0^N\qquad\text{in $L_2(\0^N)$}.
\end{equation}

We next look for a solution $w_n\coloneqq w_n^N$ to \eqref{PBN} of the form
\[
w_n(t,z)=\sum_{k=1}^n F_k^N(t)\phi^N_k(z),\qquad t\geq 0,\, z\in\0^N,
\] 
such that $(F_1^N,\ldots, F_n^N):[0,\infty)\to\R^n$ is a continuously differentiable function with
\begin{equation}\label{icn}
(F_1^N,\ldots, F_n^N)(0)= \omega \coloneqq (\omega_1,\ldots,\omega_n)
\end{equation}
and such that  $w_n$ satisfies  the equation \eqref{PBN}$_1$ with $K$ replaced by $K_n$, see \eqref{Kn}, at each time~$t\geq0$ in a weak sense,
that is, when testing the equation  with functions from $V_n$.
We note that \eqref{icn} is equivalent to the equation $w_n(0)=u_{0,n}$.
In order to establish the existence of the solution $w_n$, we test \eqref{PBN}$_1$ at $t\geq 0$ with $\phi_\ell^N$, $1\leq \ell\leq N$, and obtain that 
\begin{align*}
\frac{{\rm d} F_\ell^N(t)}{{\rm d}t}&=(\alpha-\gamma-\mu \lambda_\ell^N)F_\ell^N(t)\\[1ex]
&\quad+\int_{\0^N}\phi_\ell^N(z)\bigg[-e^{\gamma t}\Big(\sum_{k=1}^n F_k^N(t)\phi^N_k(z)\Big)\Big(\sum_{k=1}^n F_k^N(t)\p_x\phi^N_k(z)\Big)  \\[1ex]
&\hspace{2.8cm}+e^{\gamma t}\Big(\sum_{k=1}^n F_k^N(t)\p_y\phi^N_k(z)\Big)\Big(\sum_{k=1}^n F_k^N(t)(T\p_x\phi^N_k)(z)\Big) \\[1ex]
&\hspace{2.8cm}-\beta\sum_{k=1}^n F_k^N(t)(T\p_x \phi^N_k)(z)+e^{-\gamma t}K_n(t,z)\sum_{k=1}^n F_k^N(t)\phi^N_k(z)\bigg]\dz  .
\end{align*}
Hence $\xi\coloneqq(F_1^N,\ldots, F_n^N)$ solves an initial value problem  for an ordinary differential equation of the form
\begin{equation} \label{ode}
\xi'=\Psi^n(t,\xi),\qquad \xi(0)=\omega,
\end{equation}
where, since $K_n$ is smooth, it holds that $\Psi^n\in {\rm C}^\infty(\R^{n+1},\R^n) $.
By standard  theory (see e.g.~\cite{Am90,PW10}), we conclude that there exists  a maximal solution $ (F_1^N,\ldots, F_n^N)\in{\rm C}^1([0,T^+_n),\R^n)$
to~\eqref{ode} with the maximal existence time $T^+_n\in(0,\infty]$.

We next prove that the solution  is bounded on each   interval $[0,T]\cap [0,T^+_n)$, with $T>0$, which ensures that $T^+_n=\infty$.
Testing the equation \eqref{PBN}$_1$ with $w_n(t)\in V_n$, $t\in [0,T^+_n)$, we obtain that 
 \begin{equation*}
 \begin{aligned}
 \frac{{\rm d} \E_N(w_n(t))}{{\rm d}t}&=\int_{\0^N}w_n(t)\p_tw_n(t)\dz\\[1ex]
 &=\int_{\0^N} \big[-\mu|\nabla w_n| ^2 + (\alpha-\gamma) w_n^2-\beta w_nT\p_x w_n+e^{-\gamma t}K_nw_n\big](t)\dz.
 \end{aligned}
 \end{equation*}
 
Recalling Lemma~\ref{L:1'},  \eqref{gamma}, and the estimates \eqref{Pro2} and \eqref{estu0}, we  may argue as in the derivation of \eqref{Eneqw} to obtain -- after integration on~$[0,t]$ --
that 
 \begin{equation}\label{EnEs1}
 \begin{aligned}
 \E_N(w_n(t))+\frac{1}{2}\int_0^t\int_{\0^N}\mu |\nabla w_n|^2+|w_n|^2\dz\dtau&\leq\E(u_0)+\frac{1}{2}\int_0^t\int_{\0^N} K_n^2\dz\dtau  \\[1ex]
&\leq\E(u_0)+\frac{k(t)}{2} 
 \end{aligned}
 \end{equation}
 for all $t\in [0,T^+_n)$.
Since
\[
 \E_N(w_n(t))=\frac{1}{2}  \sum_{k=1}^n(F_k^N(t))^2,\qquad t\in [0,T^+_n),
\]
the estimate \eqref{EnEs1} implies that the solution $  (F_1^N,\ldots, F_n^N) $ is globally defined. 

In the remainder of this section we denote by $C$ positive constants which may depend only on  $\alpha,\, \beta,$ and~$\mu $.

\begin{lemma}[A priori estimates]\label{L:4}
There exists a positive constant $C$ such that  for all~$t\geq 0$ and~$n\geq 1$  we have
 \begin{equation}\label{estfun}
 \int_0^t\| w_n(\tau)\|_{H^1(\0^N)}^{2}\dtau\leq C\big(\E(u_0)+k(t)\big)
 \end{equation}
 and 
 \begin{equation}\label{estder}
 \int_0^t\|\p_t w_n(\tau)\|_{H^2_D(\0^N)'}^{4/3}\dtau\leq C\big(1+\E(u_0)+k(t)\big)^2e^{Ct} .
 \end{equation}
\end{lemma}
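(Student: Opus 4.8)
The plan is to derive both estimates from the energy inequality \eqref{EnEs1}, which already controls $w_n$ in $L_\infty(0,t;L_2)\cap L_2(0,t;H^1)$. First I would read off \eqref{estfun} almost immediately: since $\|w_n\|_{H^1(\0^N)}^2=\|w_n\|_2^2+\|\nabla w_n\|_2^2$ and the left-hand side of \eqref{EnEs1} contains $\tfrac12\int_0^t(\mu\|\nabla w_n\|_2^2+\|w_n\|_2^2)\dtau$, it suffices to bound the time integral of $\|w_n\|_2^2$ by $\int_0^t\E_N(w_n)\dtau\cdot 2$, which is controlled via the uniform-in-$\tau$ bound $\E_N(w_n(\tau))\le\E(u_0)+k(\tau)/2\le\E(u_0)+k(t)/2$ from \eqref{EnEs1} (using that $k$ is nondecreasing). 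Choosing $C$ to absorb $1/\mu$ and the factor from $\E$ versus $\|\cdot\|_2^2$ gives \eqref{estfun}.

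The substantive part is the time-derivative estimate \eqref{estder}. Here I would test the Galerkin equation \eqref{PBN}$_1$ (with $K$ replaced by $K_n$) against an arbitrary $\xi\in H^2_D(\0^N)$, and then against its Galerkin projection $\xi_n\in V_n$, using that $\p_t w_n\in V_n$ so that $\langle\p_t w_n|\xi\rangle=\langle\p_t w_n|\xi_n\rangle$ and $\|\xi_n\|_{H^2}\le C\|\xi\|_{H^2}$ (this projection bound follows from Lemma~\ref{L:3} and the isomorphism $1-\Delta\colon H^2_D(\0^N)\to L_2(\0^N)$). The goal is to bound $\|\p_t w_n\|_{H^2_D(\0^N)'}$ by estimating each term on the right-hand side of \eqref{PBN}$_1$ as a functional on $H^2_D$. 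The linear terms $\mu\Delta w_n$, $(\alpha-\gamma)w_n$, $-\beta T\p_x w_n$ and $e^{-\gamma t}K_n$ are all bounded in $L_2$ or $H^{-1}$ by $\|w_n\|_{H^1}$ and $\|K_n\|_2$, using Lemma~\ref{L:1'} for $T$; pairing against $\xi$ (or $\nabla\xi$) and using $H^2_D\hookrightarrow H^1$ gives contributions of order $\|w_n\|_{H^1}+\|K_n\|_2$.

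The main obstacle is the two nonlinear terms $e^{\gamma t}w_n\p_x w_n$ and $e^{\gamma t}w_{n,y}T\p_x w_n$, which are only in $L_1$ a priori and force the exponent $4/3$ rather than $2$. For these I would integrate by parts to move a derivative onto the test function, writing $\int w_n\p_x w_n\,\xi=-\tfrac12\int w_n^2\p_x\xi$ and treating the $T$-term similarly, so that only $\|w_n\|_{L_4}^2\|\nabla\xi\|_\infty$-type quantities appear; then estimate $\|w_n\|_{L_4}^2$ by the Gagliardo--Nirenberg inequality $\|w_n\|_{L_4}^2\le C\|w_n\|_2\|w_n\|_{H^1}$ in two dimensions, and control $\|\nabla\xi\|_\infty$ and $\|\xi\|_\infty$ by $\|\xi\|_{H^2_D(\0^N)}$ via Sobolev embedding on $\0^N$. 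This yields the pointwise-in-time bound
\[
\|\p_t w_n(\tau)\|_{H^2_D(\0^N)'}\le C e^{\gamma\tau}\big(1+\|w_n(\tau)\|_2\big)\|w_n(\tau)\|_{H^1(\0^N)}+C\|K_n(\tau)\|_2.
\]
Raising to the power $4/3$, integrating over $[0,t]$, and using $\|w_n(\tau)\|_2^2\le 2\E(u_0)+k(t)$ together with \eqref{estfun} (via Hölder, since $4/3<2$) produces the factor $(1+\E(u_0)+k(t))^2$, while the worst-case time factor $e^{(4/3)\gamma\tau}$ integrated over $[0,t]$ supplies the $e^{Ct}$. The bookkeeping of the exponents is routine once the Gagliardo--Nirenberg step fixes the admissible power of $\|w_n\|_{H^1}$ at $4/3$; I expect the only delicate point to be tracking the constants' dependence solely on $\alpha,\beta,\mu$ and not on $N$ or $n$, which holds because Lemma~\ref{L:1'} and the Sobolev/Gagliardo--Nirenberg constants on the rectangles $\0^N$ can be taken uniform after rescaling or by monotonicity in $N$.
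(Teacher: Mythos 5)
Your argument for \eqref{estfun} and your overall scheme for \eqref{estder} (test the Galerkin equation with $\xi\in H^2_D(\0^N)$, replace $\xi$ by its projection $\xi_n\in V_n$ with $\|\xi_n\|_{H^2}\leq C\|\xi\|_{H^2}$, integrate by parts in the nonlinear terms, then integrate the $4/3$ power in time) coincide with the paper's proof. However, your estimate of the nonlinear terms contains two linked errors. First, the Sobolev embedding you invoke is false: in two dimensions $H^2(\0^N)$ does \emph{not} embed into $W^1_\infty(\0^N)$ (only $H^2\hookrightarrow L_\infty$ holds; $\nabla\xi\in H^1$ lies in $L_p$ for every $p<\infty$ but need not be bounded), so $\|\nabla\xi\|_\infty$ cannot be controlled by $\|\xi\|_{H^2_D(\0^N)}$. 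Second, the $T$-term cannot be integrated by parts into quantities free of derivatives of $w_n$: writing the nonlinearity in divergence form, $-w_n\p_xw_n+\p_yw_n\,T\p_xw_n=\p_y\big(w_n\,T\p_xw_n\big)-\p_x\big(w_n^2\big)$, one is left after integration by parts with the term $\int_{\0^N}w_n\,(T\p_xw_n)\,\p_y\xi_n\dz$, in which $T\p_xw_n$ survives and can only be estimated in $L_2$ via Lemma~\ref{L:1'} by $\|w_n\|_{H^1}$; further integrations by parts either reintroduce $\p_xw_n$ or require third derivatives of $\xi$, which are not available.

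These two points invalidate your claimed pointwise bound
$\|\p_tw_n(\tau)\|_{H^2_D(\0^N)'}\leq Ce^{\gamma\tau}\big(1+\|w_n(\tau)\|_2\big)\|w_n(\tau)\|_{H^1}+C\|K_n(\tau)\|_2$:
with $\|w_n\|_{H^1}$ to the first power and $\|w_n\|_2$ bounded, it would yield an $L_2$-in-time bound for $\p_tw_n$, strictly stronger than \eqref{estder} -- a red flag, since the exponent $4/3$ is forced by the correct estimate. The repair is the paper's pairing: bound
\[
\Big|\int_{\0^N}w_n\,(T\p_xw_n)\,\p_y\xi_n\dz\Big|\leq\|w_n\|_4\,\|T\p_xw_n\|_2\,\|\p_y\xi_n\|_4\leq C\|w_n\|_2^{1/2}\|w_n\|_{H^1}^{3/2}\|\xi\|_{H^2},
\]
using $H^2(\0^N)\hookrightarrow W^1_4(\0^N)$, $H^{1/2}(\0^N)\hookrightarrow L_4(\0^N)$, and the interpolation inequality $\|u\|_{H^{1/2}}\leq C\|u\|_2^{1/2}\|u\|_{H^1}^{1/2}$ (your Gagliardo--Nirenberg step, applied to $w_n$ only), and treat $\int\xi_n w_n\p_xw_n\dz$ the same way. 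The resulting power $\|w_n\|_{H^1}^{3/2}$ is precisely what dictates the exponent $4/3$: raising to it produces $\|w_n\|_{H^1}^{2}$, which is integrable in time by \eqref{estfun}, and the remaining bookkeeping (the factor $\big(1+\E(u_0)+k(t)\big)^2$ and $e^{Ct}$, using \eqref{Pro2} and \eqref{EnEs1}) then goes through as you describe. Your concern about the embedding constants being uniform in $N$ is legitimate (the paper's convention requires $C=C(\alpha,\beta,\mu)$, and this uniformity is used later on $\0^M$), and it is consistent with the choice of the exponents $H^{1/2}$, $L_4$, $W^1_4$ on these rectangles.
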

 \begin{proof}
 The estimate \eqref{estfun} follows immediately from \eqref{EnEs1}.
 
 It remains to prove \eqref{estder}. Let therefore $\xi\in  H^2_D(\0^N).$
 According to Lemma~\ref{L:3}, we may decompose $\xi$ as $\xi=\xi_n+\xi_0$, where~$\xi_n\in V_n$ and~${\xi_0\coloneqq\xi-\xi_n\in H^2_D(\0^N)}$  are orthogonal  in~$H^2(\0^N)$; thus
 $$\|\xi_n\|_{H^2}\leq \|\xi\|_{H^2}\qquad\text{for all $n\geq 1$.}$$
Hence, given $t\geq 0$, we have 
 \begin{align*}
&\hspace{-0.5cm} \frac{{\rm d}}{{\rm d}t}\int_{\0^N}w_n(t)\xi\dz=\int_{\0^N}\p_t w_n(t)\xi\dz=\int_{\0^N}\p_t w_n(t) \xi_n\dz\\[1ex]
 &=\int_{\0^N} \big[\mu\Delta w_n -e^{\gamma t}w_n \p_xw_n + e^{\gamma t}\p_yw_nT\p_xw_n\\[1ex]
 &\hspace{1.5cm}+(\alpha-\gamma) w_n-\beta T\p_xw_n+e^{-\gamma t}K_n\big](t) \xi_n\dz.
 \end{align*}
 Stokes' theorem, together with the observation that $\p_y Tu=u $ for $u\in H^1(\0^N)$, leads us to the identity
 \[
\int_{\0^N} \xi_n \p_yw_n(t)T\p_xw_n(t)\dz=-\int_{\0^N} \p_y\xi_n w_n(t) T\p_xw_n(t) +\xi_n w_n(t) \p_x w_n(t)\dz,
 \] 
 and therefore we have
 \begin{equation}\label{soliden} 
  \begin{aligned}
\int_{\0^N}\p_t w_n(t)\xi\dz &=\int_{\0^N}\big[ -\mu\nabla  w_n(t)\cdot\nabla \xi_n- e^{\gamma t}\p_y\xi_n w_n(t)T\p_xw_n(t)+(\alpha-\gamma) w_n(t)\xi_n\\[1ex]
 &\hspace{1,42cm}-2e^{\gamma t}\xi_n  w_n (t)\p_x   w_n (t) -\beta \xi_n T\p_xw_n(t)+e^{-\gamma t}K_n(t)\xi_n\big] \dz.
 \end{aligned}
 \end{equation}
The relation  \eqref{soliden}, combined with Lemma~\ref{L:1'}, now yields
 \begin{align*}
&\hspace{-0.5cm}  \Big|\int_{\0^N}\p_t w_n(t)\xi\dz\Big|\\[1ex]
  &\leq C\big(\|  w_n(t)\|_{H^1}\| \xi_n\|_{H^1}+ e^{\gamma t}\|\xi_n\|_{W^1_4} \|w_n(t)\|_4\|w_n(t)\|_{H^1}+\|K_n(t)\|_2\|\xi_n\|_2\big).
  \end{align*}
 The embedings $H^2(\0^N)\hookrightarrow W^1_4(\0^N)$, $H^{1/2}(\0^N)\hookrightarrow L_4(\0^N)$,  the estimate 
 \[
 \|u\|_{H^{1/2}}\leq C\|u\|_2^{1/2}\|u\|_{H^1}^{1/2},\qquad u\in H^1(\0^N),
 \]
 and the bounds \eqref{Pro2} and \eqref{EnEs1} imply that there exists a positive constant $C$ such that 
 \begin{align*}
  \Big|\int_{\0^N}\p_t u_n(t)\xi\dz\Big|&\leq C\Big[1+k(t)^{1/2}+\big(1+k(t)+\E(u_0)\big)^{1/2}e^{Ct}\|  w_n(t)\|_{H^1}^{3/2}\Big]\| \xi_n\|_{H^2}\\[1ex]
  &\leq  C\big(1+k(t)+\E(u_0)\big)^{1/2}e^{Ct}\Big[1+\|  w_n(t)\|_{H^1}^{3/2}\Big]\| \xi\|_{H^2}.
 \end{align*}
 Therefore 
 \[
 \|\p_t w_n(t)\|_{H^2_D(\0^N)'}\leq C\big(1+k(t)+\E(u_0)\big)^{1/2}e^{Ct}\Big[1+\|  u_n(t)\|_{H^1}^{3/2}\Big]\qquad\text{for all $t\geq 0$,}
 \]
 and the desired claim \eqref{estder} follows by applying \eqref{estfun}.
 \end{proof}
 
 The bounds provided in  Lemma~\ref{L:4} enable us to obtain some compactness results for the sequence $(w_n)_n$.
 Indeed, taking into account that
\[
H^1(\0^N)\hookrightarrow H^{1/2}(\0^N)\hookrightarrow H^2_D(\0^N)',
\] 
 with compact   embedding $H^1(\0^N)\hookrightarrow H^{1/2}(\0^N)$,  we infer from \eqref{estfun},  \eqref{estder},   and~\cite[Corollary~4]{Si87}
 that the sequence $(w_n)_n$ is relatively compact in $L_2(0,t; H^{1/2}(\0^N))$.
 Moreover, a standard Cantor's diagonal argument allows us to conclude that there exists a subsequence of~$(w_n)_n$ (not relabeled) and a function $w^N:[0,\infty)\times\0^N\to\R$ such that 
 \begin{align}\label{conv1}
 &w_n\to w^N&&\text{in $L_2(0,t;H^{1/2}(\0^N))$ for all $t>0$,}\\[1ex]
\label{conv2}
 &w_n(t)\to w^N(t)&&\text{in $L_2(\0^N)$ for almost all $t>0$,}\\[1ex]
\label{conv3}
 &\nabla w_n\rightharpoonup \nabla w^N&&\text{in $L_2((0,t)\times \0^N)$ for all $t>0$.}
\end{align} 

Recalling \eqref{estder}, we also have
 \begin{equation}\label{conv4}
 \p_t w_n\rightharpoonup \p_tw^N\qquad\text{in $L_{4/3}(0,t; H_D^2(\0^N)')$ for all $t>0$.}
\end{equation}

 The convergences \eqref{conv1}--\eqref{conv4} enable us to spell out the proof of Proposition~\ref{P:1}.

 \begin{proof}[Proof of Proposition~\ref{P:1}]
 We show that the function $w^N$ identified above by the convergences~\eqref{conv1}--\eqref{conv4} satisfies all the  properties stated in Proposition~\ref{P:1}.
 
 The property (i) follows from \eqref{conv1}--\eqref{conv4}  by  passing to $\liminf$ in~\eqref{EnEs1} and \eqref{estder}.
 
  Moreover, passing to $\liminf$ in \eqref{EnEs1}, we deduce from \eqref{Pro1} and \eqref{conv1}--\eqref{conv3} that the energy estimate~\eqref{EnEs0} is valid.
  
  It remains to verify (ii). To this end we integrate over $[0,t]$, for given $\xi\in H^2_D(\0^N)$, the identity~\eqref{soliden}, to deduce that  for all $n\geq 1$ we have
 \begin{equation}\label{jjj} 
  \begin{aligned}
\int_{\0^N} w_n(t)\xi_n\dz&=\int_{\0^N} u_{0,n} \xi_n\dz \\[1ex]
&\quad +\int_0^t\int_{\0^N}\big[ -\mu\nabla  w_n \cdot\nabla \xi_n- e^{\gamma \tau}\p_y\xi_n w_n T\p_xw_n  -2e^{\gamma \tau}\xi_n w_n\p_xw_n\\[1ex]
 &\hspace{2.35cm}+(\alpha-\gamma) w_n\xi_n-\beta \xi_n T\p_xw_n+e^{-\gamma \tau}K_n\xi_n\big] \dz\dtau.
 \end{aligned}
 \end{equation}
 In view of  \eqref{Eq2}, \eqref{conv0}, and \eqref{conv2} we have for almost all $t>0$ that 
 \[
 \int_{\0^N} w_n(t)\xi_n\dz\to  \int_{\0^N} w^N(t)\xi\dz \qquad\text{and}\qquad \int_{\0^N} u_{0,n} \xi_n\dz\to  \int_{\0^N} u_0^N \xi\dz.
 \]
Besides, since $\xi_n\to \xi$ in $H^1(\0^N)$, the  convergences~\eqref{Pro1},~\eqref{conv1}, and~\eqref{conv3}  ensure that 
 \begin{align*}
& \int_0^t\int_{\0^N}-\mu\nabla  w_n \cdot\nabla \xi_n+(\alpha-\gamma) w_n \xi_n + e^{-\gamma \tau}K_n\xi_n\dz\dtau\\[1ex]
 &\qquad\to \int_0^t\int_{\0^N}-\mu\nabla  w^N \cdot\nabla \xi+(\alpha-\gamma) w^N \xi+e^{-\gamma \tau}K\xi \dz\dtau\qquad\text{for all $t>0$.}
 \end{align*}

We next observe that, since $H^{1/2}(\0^N)\hookrightarrow L_4(\0^N)$, the relations \eqref{Eq2} and \eqref{conv1} lead  to~${\xi_n\to\xi}$ in $L_\infty(0,t; L_4(\0^N))$ and $w_n\to w^N$ in $L_2(0,t; L_4(\0^N))$, hence
\begin{equation}\label{uxi}
w_n\xi_n\to w^N\xi\qquad\text{in $L_2((0,t)\times\0^N)$ for all $t>0$.}
\end{equation}
Combining the  convergences \eqref{conv3} and \eqref{uxi}, we arrive at
\[
\int_0^t\int_{\0^N}e^{\gamma \tau}\xi_n w_n\p_xw_n \dz\dtau\to \int_0^t\int_{\0^N}e^{\gamma \tau}\xi w^N\p_xw^N \dz\dtau.
\]

 It remains to pass to the limit in the terms that involve the nonlocal operator~$T$.
 Thus we define  the functions
 \[
 \zeta_n(t,x,s)\coloneqq \int_s^1( w_n(t)\p_y\xi_n +\beta \xi_n )(x,y)\,{\rm d}y \quad \text{and} \quad \zeta(t,x,s)\coloneqq \int_s^1( w^N(t)\p_y\xi +\beta \xi )(x,y)\,{\rm d}y
 \]
 for $t\geq 0$ and $(x,s)\in\0^N$, and note  that, in view of Fubini's theorem,  we have
 \begin{align}\label{vrr}
\int_0^t\int_{\0^N}e^{\gamma \tau}(w_n \p_y\xi_n  +\beta \xi_n )T\p_xw_n\dz\dtau =\int_0^t\int_{\0^N}e^{\gamma \tau}\p_xw_n(\tau,x,s)\zeta_n(\tau,x,s){\,\rm d}(x,s)\dtau.
 \end{align}
Using H\"older's inequality, we then find
\begin{align*}
\|\zeta_n(t)-\zeta(t)\|^2_{L_2(\0^N)}&\leq \int_{\0^N}\int_0^1|w_n(t)\p_y\xi_n  +\beta \xi_n -w^N(t)\p_y\xi  -\beta \xi|^2(x,y)\,{\rm d}y{\,\rm d}(x,s)\\[1ex]
&\leq \|w_n(t) \p_y\xi_n  -w^N(t) \p_y\xi  \|^2_{L_2(\0^N)}+\beta^2\|  \xi_n -  \xi\|^2_{L_2(\0^N)}\\[1ex]
&\leq \|w_n(t)   -w^N(t)   \|^2_{L_4(\0^N)}\|\p_y\xi_n\|_{L_4(\0^N)}^2\\[1ex]
&\quad+\|w^N(t)\|_{L_4(\0^N)}^2\| \p_y\xi_n  - \p_y\xi  \|^2_{L_4(\0^N)}\\[1ex]
&\quad+\beta^2\|  \xi_n -  \xi\|^2_{L_2(\0^N)} 
\end{align*}
for $t\geq0$.
Since $H^2(\0^N)\hookrightarrow W^1_4(\0^N)$, the relation~\eqref{Eq2} guarantees that   
$\p_y\xi_n\to\p_y\xi$ in~${L_\infty(0,t; L_4(\0^N))}$, and with $w_n\to w^N$ in $L_2(0,t; L_4(\0^N))$ we may conclude that 
\[
\zeta_n\to\zeta\qquad\text{in $L_2((0,t)\times\0^N)$ for all $t>0$.}
\]
The latter property, together with \eqref{conv3}, enables us to pass to the limit $n\to\infty$ in \eqref{vrr} to arrive    at
\begin{align*}
\int_0^t\int_{\0^N}e^{\gamma \tau}(w_n \p_y\xi_n  +\beta \xi_n )T\p_xw_n\dz\dtau\to \int_0^t\int_{\0^N}e^{ \gamma \tau}(w^N \p_y\xi  +\beta \xi )T\p_xw^N\dz\dtau.
 \end{align*}
 Finally, observing that both sides of \eqref{Equ1} are continuous with respect to $t$, we conclude that the identity~\eqref{Equ1} is satisfied for all $t> 0$, and the  proof is complete.
 \end{proof}

 \subsection{The proof of Theorem~\ref{MT2}}\label{SS:22}
 We now prove that the sequence $(w^N)_N$ provided by Proposition~\ref{P:1} is relatively compact in $L_2(0,t; H^s(\0^{M}))$ for all $s\in(0,1)$, $M\geq1$, and~$t>0$.
 This is one of the main ingredients which will make it possible to pass to the limit $N\to\infty$ in the relations \eqref{Equ1} and \eqref{EnEs0} and thus establish our main result, cf. Theorem~\ref{MT2}.
 Let therefore $M\geq 1$ be given.
Since $\0^M\subset\0^N$ for  $N\geq M$, we deduce from \eqref{EnEs0} that 
\begin{equation}\label{Bound1}
\text{$(w^N)_{N\geq M}$ is bounded in $L_\infty(0,t;L_2(\0^M))\cap L_2(0,t; H^1(\0^M))$ for all $t>0.$}
\end{equation}
We further note that the identity \eqref{Equ1} entails that 
 \begin{equation*}
 \begin{aligned}
&\hspace{-0.5cm}\langle\p_t  w^N(t),\xi\rangle_{H^2_D(\0^N)', H^2_D(\0^N)}\\[1ex]
&=\int_{\0^N}\big[-\mu\nabla  w^N(t) \cdot\nabla \xi- e^{\gamma t}\p_y\xi w^N(t) T\p_xw^N(t)  -2e^{ \gamma t}\xi w^N\p_xw^N\\[1ex]
 &\hspace{1.4cm}+(\alpha-\gamma) w^N(t)\xi-\beta \xi T\p_xw^N(t)+e^{-\gamma t}K(t)\xi\big] \dz
 \end{aligned}
 \end{equation*}
 for all $N\geq 1$, $\xi\in H^2_D(\0^N), $ and almost all $t> 0$. 
 Let $H^2_0(\0^M)$ denote the closure of the set~${\rm C}^\infty_0(\0^M)$ in $H^2(\0^M)$. 
 Given $N\geq M$, we clearly have~$H^2_0(\0^M)\hookrightarrow H^2_D(\0^N),$ and therefore~${\p_tw^N(t)\in H^2_D(\0^N)'\hookrightarrow H^2_0(\0^M)'}$, where, arguing as in the proof of Lemma~\ref{L:4}, it holds that  
 \begin{equation*} 
 \begin{aligned}
&\hspace{-0.5cm}\big|\langle\p_t   w^N(t),\xi\rangle_{H^2_0(\0^M)', H^2_0(\0^M)}\big|\\[1ex]
&=\bigg|\int_{\0^M}\big[-\mu\nabla  w^N(t) \cdot\nabla \xi- e^{\gamma t}\p_y\xi w^N(t) T\p_x w^N(t)  -2e^{ \gamma t}\xi w^N\p_x w^N\\[1ex]
 &\hspace{1.65cm}+(\alpha-\gamma) w^N(t)\xi-\beta \xi T\p_xw^N(t) +e^{-\gamma t}K(t)\xi\big] \dz\bigg|\\[1ex]
 &\leq C\bigg[1+\|K(t)\|_2+\bigg(1+\E(u_0)+\int_0^t\int_{\0}K^2\dz\dtau\bigg)^{1/2}e^{Ct}\|  w^N(t)\|_{H^1}^{3/2}\bigg]\|\xi\|_{H^2}.
 \end{aligned}
 \end{equation*}
 Consequently, for almost all $t>0$ we have that
 \[
 \|\p_t  w^N(t)\|_{H^2_0(\0^M)'}\leq C\bigg[1+\|K(t)\|_2+\bigg(1+\E(u_0)+\int_0^t\int_{\0}K^2\dz\dtau\bigg)^{1/2}e^{Ct}\|  w^N(t)\|_{H^1}^{3/2}\bigg],
 \]
 and with \eqref{Bound1} we deduce that 
 \begin{equation}\label{Bound2}
\text{$(\p_t w^N)_{N\geq M}$ is bounded in $L_{4/3}(0,t; H^2_0(\0^M)')$ for all $t>0.$}
\end{equation}

Due to the embeddings
\[
H^1(\0^M)\hookrightarrow H^{1/2}(\0^M)\hookrightarrow H^2_0(\0^M)'
\] 
 (where the first embedding is compact), the bounds \eqref{Bound1}--\eqref{Bound2},    and~\cite[Corollary~4]{Si87}
 we may conclude that the sequence $(w^N)_{N\geq M}$ is relatively compact in $L_2(0,t; H^{1/2}(\0^M))$.
 Moreover, a standard Cantor's diagonal argument allows us to conclude that there exists a subsequence of $( w^N)_N$ (not relabeled)  and a function $w:[0,\infty)\times\0\to\R$ such that 
 \begin{align}\label{cv1}
 &w^N\to w&&\text{in $L_2(0,t;H^{1/2}(\0^M))$ for all $t>0$, $M\geq1$,}\\[1ex]
\label{cv2}
 &w^N(t)\to w(t)&&\text{in $L_2(\0^M)$ for almost all $t>0$ and all  $M\geq1$,}\\[1ex]
\label{cv3}
 &\nabla w^N\rightharpoonup \nabla w&&\text{in $L_2((0,t)\times \0^M)$ for all $t>0$, $M\geq1$,}\\[1ex]
 \label{cv4}
 &\p_t w^N\rightharpoonup \p_t w&&\text{in $L_{4/3}(0,t; H^2_0(\0^M)')$ for all $t>0$, $M\geq1$.}
\end{align} 
We now infer from \eqref{EnEs0} that for $N\geq M$ and for almost all $t>0$ we have
\[
\int_{\0^M}(w^N(t))^2\dz+\int_0^t\int_{\0^M}\mu |\nabla w^N|^2+|w^N|^2\dz\dtau\leq 2\E(u_0) +  \int_0^t\int_{\0}K^2\dz\dtau. 
\]
Passing to $\liminf$ in the latter  estimate we find, in view of \eqref{cv1}--\eqref{cv3}, that 
\[
\int_{\0^M}(w(t))^2\dz+\int_0^t\int_{\0^M}\mu |\nabla w|^2+w^2\dz\dtau\leq 2\E(u_0) +  \int_0^t\int_{\0}K^2\dz\dtau. 
\]
  By the monotone convergence theorem, we may pass to the limit $M\to\infty$ in the latter relation and conclude that
\eqref{EnEs} is satisfied, which proves also the claim (i).

It remains to demonstrate the property \eqref{Eq0}. 
However, this follows by letting $N\to\infty$ in  \eqref{EnEs0} (with $\xi\in {\rm C}^\infty_0(\0)$), in view of the convergences \eqref{cv1}--\eqref{cv4}.
Since the details are similar to those in Proposition~\ref{P:1} we omit them here.

\subsection{The proof of Theorem~\ref{MT3}}\label{SS:24}

It is straightforward to deduce from the identity~\eqref{Eq0} that  for all $0\leq s<t$ and $\xi\in{\rm C}^\infty_0(\0)$ we have
 \begin{equation}\label{Eq1}
\int_{\0} w(t)\xi\dz-\int_{\0} w(s)\xi\dz+\int_s^t\int_{\0}\big[\mu\nabla  w \cdot\nabla \xi+J_1\p_y\xi +J_2\xi\big] \dz \dtau=0  ,
 \end{equation}
where
 \[
 J_1 \coloneqq e^{ \gamma t}  w T\p_xw\qquad\text{and}\qquad J_2\coloneqq 2e^{ \gamma t}  w\p_xw-(\alpha-\gamma) w+\beta T\p_xw-e^{-\gamma t}K.
 \]
 Let $t>0$ be given and choose  $\eta\in{\rm C}^\infty([0,t]\times\0) $ with the property  that there exists $M>0$ with  ${\rm  \supp\,}\eta(\tau)\subset \0^M $ for all $\tau\in[0,t],$
 where as before we set $\0^M\coloneqq (-2^{M},2^M)\times(0,1).$
Let further  $\Delta_n$, $1\leq n\in\N$, be the uniform partition $0=t_0<t_1<\ldots<t_n=t$  of the interval~${[0,t].}$

It then follows from \eqref{Eq1} that 
 \begin{equation}\label{1d}
 \begin{aligned}
 &\hspace{-1cm}\int_{\0^M} w(t) \eta(t)\dz-\int_{\0^M} u_0 \eta(0)\dz+\int_{\0^M} \sum_{i=1}^n\big(w(t_i)\eta(t_{i-1})-w(t_{i-1})\eta(t_{i})\big)\dz\\[1ex]
 &+\int_0^t\int_{\0^M}  \mu\nabla w\cdot\nabla \varphi_n +J_1\p_y\varphi_n+J_2\varphi_n \dz\dtau=0,
 \end{aligned}
 \end{equation}
 where
 \[
 \varphi_n(\tau,t)\coloneqq\sum_{i=1}^n(\eta(t_{i-1},z)+\eta(t_{i},z)){\bf 1}_{(t_{i-1},t_i]}(\tau),\qquad (\tau,z)\in[0,t]\times\0.
 \]
 
 We next pass to the limit $n\to\infty $ in all terms of \eqref{1d}.
To this end we first observe that 
 \begin{equation*}
 \left.
 \begin{array}{lll}
 \varphi_n \to 2\eta \\[1ex]
\nabla \varphi_n \to 2\nabla \eta 
 \end{array}
 \right\}\qquad\text{for $n\to\infty$ in $L_\infty((0,t)\times \0^M)$.}
 \end{equation*}
 Since $\nabla w, \, J_1,\, J_2\in L_1((0,t)\times \0^M)$ we obtain that 
 \begin{equation}\label{ref1}
 \int_0^t\int_{\0^M}  \mu\nabla w\cdot\nabla \varphi_n +J_1\p_y\varphi_n+J_2\varphi_n \dz\dtau\to  2\int_0^t\int_{\0}  \mu\nabla w\cdot\nabla \eta +J_1\p_y\eta+J_2\eta \dz\dtau.
 \end{equation}
 
 We next pass to the limit $n\to\infty$ in 
 \begin{align*}
  \int_{\0^M}  \sum_{i=1}^n\big(w(t_i)\eta(t_{i-1})-w(t_{i-1})\eta(t_{i})\big)\dz
  &= \int_{\0^M} w(t)\eta(t_{n-1})-u_0\eta(t_1)\dz \\[1ex]
  &\quad+\int_{\0^M} \sum_{i=1}^{n-1} w(t_i)(\eta(t_{i-1})-\eta(t_{i+1}) )\dz.
 \end{align*}
Recalling \eqref{cv4}, we have that $w\in{\rm  C}([0,t], H^2_0(\0^M)')$, and therefore
 \begin{align}\label{ref2}
 \int_{\0^M} w(t)\eta(t_{n-1})-u_0\eta(t_1)\dz \to \int_{\0^M} w(t)\eta(t)-u_0\eta(0)\dz \qquad\text{as $n\to\infty$.}
 \end{align}
 Furthermore, exploiting that $w\in{\rm  C}([0,t], H^2_0(\0^M)')$, we also have
 \begin{align*}
 &\hspace{-1cm}\sum_{i=1}^{n-1}\int_{\0^M} w(t_i)(\eta(t_{i+1})- \eta(t_{i-1}))\dz\\[1ex]
 & =\sum_{i=1}^{n-1}\int_{\0^M} w(t_i)(\eta(t_{i+1})- \eta(t_{i}))\dz+ \sum_{i=1}^{n-1}\int_{\0^M} w(t_i)(\eta(t_{i})- \eta(t_{i-1}))\dz\\[1ex]
 &\to2\int_0^t\int_{\0^M}w\p_t\eta\dz\dtau \qquad\text{as $n\to\infty$.}
 \end{align*}
The latter convergence, together with \eqref{ref1}--\eqref{ref2}, yields from \eqref{1d} the integral identity
 \begin{equation}\label{2d}
 \begin{aligned}
 &\hspace{-1cm}\int_{\0} w(t) \eta(t)\dz-\int_{\0} u_0 \eta(0)\dz-\int_0^t\int_{\0} w \p_t \eta\dz\dtau\\[1ex]
 &+\int_0^t\int_{\0}  \mu\nabla w\cdot\nabla \eta +J_1\p_y\eta+J_2\eta \dz\dtau=0.
 \end{aligned}
 \end{equation}
 Setting $u(t,z)\coloneqq e^{\gamma t} w(t,z)$ and $\phi(t,z)\coloneqq e^{-\gamma t}\eta(t,z)$ for $t\geq 0$ and $z\in\0$, it follows from Theorem~\ref{MT2} and \eqref{2d} that $u$ 
 is a weak solution to \eqref{PB'} as stated in Theorem~\ref{MT3}~(i) and~(ii).
 Finally, the energy estimate ~\eqref{EnE} follows directly from~\eqref{EnEs}.

\section*{Acknowledgments} 
The authors are grateful to Helmut Abels for interesting discussions on the subject of this paper. 
This work is partially supported by the RTG 2339 ``Interfaces, Complex Structures, and Singular Limits'' of the German Science Foundation (DFG) and the grant Z 387-N of the Austrian Science Fund (FWF), Austria.
 
\bibliographystyle{siam}
\bibliography{Literature}
\end{document}